\let\reftagform@=\tagform@
\def\tagform@#1{\maketag@@@{(\ignorespaces\textcolor{purple}{#1}\unskip\@@italiccorr)}}
\renewcommand{\eqref}[1]{\textup{\reftagform@{\ref{#1}}}}
\DeclareUrlCommand\ULurl@@{%
  \def\UrlLeft{\uline\bgroup}%
  \def\UrlRight{\egroup}}
\def\ULurl@#1{\hyper@linkurl{\ULurl@@{#1}}{#1}}
\DeclareRobustCommand*\ULurl{\hyper@normalise\ULurl@}
\def\lessim{\ \lower4pt\hbox{$
		\buildrel{\displaystyle <}\over\sim$}\ }
\def\gessim{\ \lower4pt\hbox{$\buildrel{\displaystyle >}
		\over\sim$}\ }
\def\la{\langle}
\def\ra{\rangle}
\newcommand{\e}{\mathbb{E}}
\newcommand{\p}{\mathbb{P}}
\newcommand{\bg}{\boldsymbol{\gamma}}
\newcommand{\bb}{\boldsymbol{\beta}}
\newcommand{\bm}{\boldsymbol{m}}
\newtheorem{lemma}{\bf Lemma}
\newtheorem{theorem}{\bf Theorem}
\newtheorem{corollary}{\bf Corollary}
\newtheorem{remark}{\bf Remark}
\newenvironment{Proof of lemma}{\noindent{\bf Proof of Lemma}}{\hfill$\Box$\newline}
\newenvironment{Proof of theorem}{\noindent{\bf Proof of Theorem}}{\hfill{\footnotesize${\square}$}\newline}
\newenvironment{Proof of theorems}{\noindent{\bf Proof of Theorems}}{\hfill$\Box$\newline}
\newenvironment{Proof of proposition}{\noindent{\bf Proof of Proposition}}{\hfill$\Box$\newline}
\newenvironment{Proof of propositions}{\noindent{\bf Proof of Propositions}}{\hfill$\Box$\newline}
\newenvironment{Proof of exercise}{\noindent{\it Proof of Exercise:}}{\hfill$\Box$}
\begin{document}

\title{A duality principle in spin glasses}

\author{Antonio Auffinger \thanks{Department of Mathematics. Email: tuca@northwestern.edu} 
        \\ \small{Northwestern University} 
	\and Wei-Kuo Chen  \thanks{School of Mathematics. Email: wkchen@umn.edu}
	\\ \small{University of Minnesota}
	}\maketitle

\footnotetext{MSC2000: Primary 60F10, 82D30.}
\footnotetext{Keywords: Large deviation, Legendre duality, Parisi formula, spin glass.}

%

\begin{abstract}
We  prove a duality principle that connects the thermodynamic limits of the free energies of the Hamiltonians and their squared interactions. Under the main assumption that the limiting free energy is concave in the squared temperature parameter, we show that this relation is valid in a large class of disordered systems. In particular, when applied to mean field spin glasses, this duality provides an interpretation of the Parisi formula as an inverted variational principle, establishing a prediction of Guerra \cite{G15}.
\end{abstract}

\maketitle


	\section{Introduction} A fundamental goal in spin glass theory is to study the thermodynamic limit of random Hamiltonian systems that simultaneously exhibit ferromagnetic and anti-ferromagnetic properties. Roughly speaking, one seeks to obtain information on macroscopic phenomena such as phase transitions through the analysis of stochastic microscopic interactions. Applications of spin glass theory include many problems in the fields of physics, biology, neurology and computer science, see for instance \cite{MPV}. More importantly, it also has led to a vast and challenging collection of beautiful mathematical questions \cite{T03}.

	For each $N\geq 1$, let $\Sigma_N$ be a configuration space and $\nu_N$ be a random  measure on $\Sigma_N$. Let $H_N$ be a random Hamiltonian indexed by $\Sigma_N.$ In this paper, we study the thermodynamic limit of the free energy associated to the (inverse) temperature $\beta\in\mathbb{R},$ 
	$$F(\beta):=\lim_{N\rightarrow\infty} \frac{1}{N} \log Z_{N}(\beta),$$
	where $$Z_N(\beta):=\int_{\Sigma_N}\exp \big( \beta H_N(\sigma)\big)\nu_N(d\sigma)$$ is called the partition function. 
	There are two classical viewpoints that are often employed to guess or compute an expression for this limit. The first one is physicists' entropic principle in statistical mechanics. It suggests that $F(\beta)$ has a variational representation that involves a maximization between the entropy of a generic thermodynamical state and its internal energy. Another approach is via large deviation theory, from which  Laplace-Varadhan's lemma writes $F(\beta)$ as a maximization problem between an energy functional and a rate function. The reader interested in these classical approaches can see \cite{DZ, Ellis} and the references therein.
	
	 In contrast to the classical methods, in the ground breaking treatment of the famous Sherrington-Kirkpatrick (SK) model, Giorgio Parisi \cite{Parisi} predicted that the thermodynamic limit of the free energy can be computed through a sharply different variational problem. Introducing the notion of replica symmetry breaking, he expressed $F(\beta)$ as a minimization problem of a functional $\mathcal{P}_{\beta}$ over the space of all probability measures on the interval $[0,1]$. The minimizer of this variational problem is known as the functional order parameter and plays a fundamental role in describing the system. Following Guerra's beautiful discovery of the replica symmetry breaking bound, Parisi's formula was firstly verified in the celebrated work of Talagrand \cite{Talagrand} and then extended to all mixed $p$-spin models by Panchenko \cite{Panchenko}. Further generalizations have also been pushed forward in the generalized random energy model \cite{BK,BK2}, the spherical mixed $p$-spin model \cite{C13}, the Ghatak-Sherrington model \cite{P05}, the multi-species SK model \cite{P15}, the mixed $p$-spin model with vector spins \cite{P152} and the Potts spin glass model \cite{P153}.
	 
	 Despite all remarkable progress during the past 35 years, some aspects of the Parisi solution remain to be understood. In particular, it is unclear how to explain the Parisi solution directly from the aforementioned classical methods. The aim of this paper is to provide a rigorous framework that connects the Parisi formula and the classical approaches. It is motivated by an observation of Francesco Guerra \cite{G15}, who suggested that the thermodynamic limit of the free energy in the mixed $p$-spin model is concave in the squared temperature. From such concavity, he conjectured a Legendre duality between the Parisi formula and the Legendre transform $\Gamma$ of the scaled Parisi functional $\mathcal{P}_{\sqrt{\beta}}$, where the temperature and the functional order parameter are conjugate variables. This duality was rigorously established for the mixed $p$-spin models in the recent work of Auffinger and Chen \cite{AC15}, but no interpretation of the functional $\Gamma$ was given.

	In this paper, we show that Guerra's insight not only holds for the mixed $p$-spin glass model, but also extends to many  disordered models. We establish that if the limiting free energy in the squared temperature is concave, then the system will exhibit a general Legendre duality principle between the limiting free energies corresponding to the original and squared Hamiltonians. Analogously to the classical entropic principle where temperature and energy parameters are Legendre conjugates, this duality also connects these two parameters. As an immediate consequence, we recover the Legendre structure obtained in \cite{AC15}. Foremost, we give a direct interpretation of the functional $\Gamma$, as the limiting free energy of the squared Hamiltonian. While the Parisi formula relates temperature parameter with the Parisi measure, our new representation  conjugates energy and temperature parameters, as in the classical approaches. From these, we also derive a new maximum variational representation for the limiting free energy, which shares some features similar to those obtained through large deviation principles in many classical examples, see \cite{Ellis,RT}.
	 
	 In addition, we apply our Legendre duality to the random energy model, from which we obtain the corresponding Parisi formula previously derived by Guerra \cite{G13} and Bolthausen-Kistler \cite{BK}. Our result extends to the spherical mixed $p$-spin model as well, where it explains the nature of the Crisanti-Sommers representation of the limiting free energy.

	{\noindent \bf Acknowledgements.} Both authors thank Francesco Guerra for bringing their attention to the papers \cite{G13, G15} and for several useful suggestions. We also thank Dmitry Panchenko for valuable comments on the presentation of the paper. The research of A. A. is partly supported by NSF grant DMS-1597864 and NSF CAREER Grant DMS-1653552. The research of W.-K. C. is partly supported by NSF grant DMS-1642207 and Hong Kong Research Grants Council GRF-14302515. 
	
	 \section{Legendre duality}\label{sec1}
	 \subsection{Main results}\label{sec1.1}
For each $N\geq 1,$ let $\Sigma_N$ be a configuration space and $\nu_N$ be a random measure on $\Sigma_N$ with $\nu_N(\Sigma_N)<\infty$. Suppose that $(H_{N,p})_{p\geq 1}$ is a sequence of independent random Hamiltonians indexed by $\Sigma_{N}$ and is independent of $\nu_N$. Denote the normed vector space $\ell^{2}(\mathbb N) $ as $$\mathcal{T}=\bigg \{\bb=(\beta_p)_{p\geq 1}: \sum_{p\geq 1}\beta_p^2<\infty \bigg \}.$$  The sequence $\bb$ here is called the (inverse) temperature parameter. 
	 Throughout Section \ref{sec1}, we assume that the following conditions hold:
	 \begin{itemize}
	 	\item[(A0)] Suppose that for each $p\geq 1$, the Hamiltonian $H_{N,p}$ is symmetric, that is, the families of random variables $(H_{N,p}(\sigma))_{\sigma\in\Sigma_N}$ and $(-H_{N,p}(\sigma))_{\sigma\in\Sigma_N}$ share the same joint distribution.
	 	
	 	\item[(A1)]  There exist  constants $C_1,C_2>0$ such that for any $N\geq 1$ and $p\geq 1$, 
	 	$$\e \sup_{\sigma\in\Sigma_N} H_{N,p}(\sigma)\leq \frac{C_1N}{2^{p/2}},
	 	$$
	 	and
	 	\begin{align*}
	 	\p\Bigl(\Bigl|\sup_{\sigma\in\Sigma_N}H_{N,p}(\sigma)-\e\sup_{\sigma\in\Sigma_N}H_{N,p}(\sigma)\Bigr|\geq t\Bigr)\leq C_2\exp\Bigl(-\frac{2^{p}t^2}{C_2N}\Bigr) ,\,\,\forall t>0.
	 	\end{align*}
	 	\item[(A2)] For each $\bb\in\mathcal{T}$, the limit of the free energy
	 	\begin{align*}
	 	F_N(\bb)&:=\frac{1}{N}\log \int_{\Sigma_N}\exp \sum_{p\geq 1}\beta_pH_{N,p}(\sigma)\nu_N(d\sigma)
	 	\end{align*}
	 	converges a.s. to some nonrandom $F(\bb)$. 
	 \end{itemize}
	
	\begin{remark}[Universality]\rm
	Throughout the paper, we neither impose any other assumptions on the Hamiltonians $(H_{N,p})$, nor on the measure space $(\Sigma_{N},\nu_{N})$. In particular, our results do not require $H_{N,p}$ to be a Gaussian process.
	\end{remark}
	
	 In the main examples that we are about to discuss in Subsection \ref{sec:Examples} below, the configuration space $\Sigma_{N}$ is either the discrete hypercube $\{\pm 1 \}^{N}$ or a sphere in $\mathbb R^{N}$ and the Hamiltoninans are the mixed $p$-spin models. They are known to satisfy assumptions (A0), (A1) and (A2).

	 Our first main theorem establishes a general duality principle that conjugates the temperature and energy parameters. Let
	 \begin{align*}
	 \mathcal{B}&=\bigg \{\bb=(\beta_p)_{p\geq 1}\in\mathcal{T}:\beta_p>0,\,\,\forall p\geq 1\bigg\}
	 \end{align*}
	 be the space of positive temperatures and
	 \begin{align*}
	 \mathcal{M}&=\bigg \{\bm=(m_p)_{p\geq 1}:m_p>0,\,\,\forall p\geq 1 \bigg \}
	 \end{align*} 
	 denote the space of scaled energies.
	 Define the free energy for the squared Hamiltonian,
	 \begin{align}\label{eq:VNdef}
	 V_N(\bm)&=\frac{1}{N}\log\int_{\Sigma_N}\exp  \sum_{p\geq 1}\frac{N}{2m_p}\Bigl(\frac{H_{N,p}(\sigma)}{N}\Bigr)^2\nu_N(d\sigma),\,\,\forall \bm\in \mathcal{M} .
	 \end{align} 

	 Denote by $V$ the limit of $(V_N)_{N\geq 1}$ whenever it exists in $\overline{\mathbb{R}}:=\mathbb R \cup \{\infty\}$. For any $\boldsymbol{\beta}\in\mathcal{B} $, $\bm\in\mathcal{M} $ and $f:[0,\infty)^{2}\rightarrow \mathbb{R}$, we set $f(\boldsymbol{\beta},\bm)=(f(\beta_p,m_p))_{p\geq 1}.$  We say that $F$ is G\^{a}teaux differentiable at $\bb$ if $$\lim_{t\rightarrow 0}\frac{F(\bb+t\bb')-F(\bb)}{t}$$ exists for any $\bb'\in\mathcal{T}.$ Our first result shows that $V$ exists and it can be expressed as a variational formula in terms of the free energy $F$. Moreover, if $F(\bb^{1/2})$ is concave in $\bb\in\mathcal{B} $, then this variational representation is invertible.
	 
	 \begin{theorem}[Legendre duality]\label{duality}
	 	If (A0), (A1) and (A2) hold, then we have
	 	\begin{itemize}
	 		\item[$(i)$] For any $\bm\in\mathcal{M} ,$
	 		\begin{align}
	 		\label{duality:eq1}
	 		V(\bm)&=\sup_{\bb\in\mathcal{B} }\Bigl(F(\bb)-\frac{1}{2}\sum_{p\geq 1} \beta_p^2 m_p\Bigr).
	 		\end{align}
	 		\item[$(ii)$] If $F(\bb)$ is G\^{a}teaux differentiable everywhere in $\mathcal{B} $ and $F(\bb^{1/2})$ is concave on $\mathcal{B} $, then for any $\bb\in \mathcal{B} ,$
	 		\begin{align}\label{duality:eq2}
	 		F(\bb)&=\inf_{\bm\in \mathcal{M} }\Bigl(V(\bm)+\frac{1}{2}\sum_{p\geq 1} \beta_p^2 m_p\Bigr).
	 		\end{align}
	 	\end{itemize}   	 
	 \end{theorem}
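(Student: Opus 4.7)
The plan is to prove part $(i)$ by linearizing the quadratic interaction in $V_N$ with a Gaussian identity and then invoking a Laplace principle, and to derive part $(ii)$ from $(i)$ by Fenchel--Moreau biconjugation.

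For part $(i)$, the starting point is the elementary one-dimensional identity
\begin{equation*}
\exp\Bigl(\tfrac{x^2}{2a}\Bigr)=\sqrt{\tfrac{a}{2\pi}}\int_{\mathbb R}\exp\Bigl(\beta x-\tfrac{a\beta^2}{2}\Bigr)\,d\beta,\qquad a>0,
\end{equation*}
applied coordinate-wise with $x=H_{N,p}(\sigma)$ and $a=m_p N$. To avoid handling an infinite product of Gaussians directly, I would first truncate to $p\le K$; assumption (A1) (the $N/2^{p/2}$ decay of $\e\sup_\sigma H_{N,p}$ together with the subgaussian tail) shows that the contributions from $p>K$ to both $V_N$ and $F_N$ vanish uniformly in $N$ as $K\to\infty$. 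On the truncated model, the Gaussian identity in the first $K$ coordinates combined with Fubini produces
\begin{equation*}
V_N^{(K)}(\bm)=\tfrac{1}{N}\log\int_{\mathbb R^K}\exp\bigl(N F_N^{(K)}(\bb)\bigr)\prod_{p=1}^{K}\sqrt{\tfrac{m_p N}{2\pi}}\,e^{-Nm_p\beta_p^2/2}\,d\beta_p.
\end{equation*}
A Varadhan-type Laplace argument (using the a.s.\ limit $F_N^{(K)}\to F^{(K)}$ from (A2) and the $\sup$-bound in (A1) to exchange $\lim_N$ with $\sup_\bb$) then gives
\begin{equation*}
V^{(K)}(\bm)=\sup_{\bb\in\mathbb R^K}\Bigl(F^{(K)}(\bb)-\tfrac12\textstyle\sum_{p=1}^{K} m_p\beta_p^2\Bigr).
\end{equation*}
The symmetry in (A0) restricts the sup to $\bb$ with positive entries, and passing $K\to\infty$ yields \eqref{duality:eq1}.

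For part $(ii)$, the substitution $\gamma_p:=\beta_p^2$ and the definition $G(\bg):=F(\bg^{1/2})$ rewrite \eqref{duality:eq1} as
\begin{equation*}
V(\bm)=\sup_{\bg>0,\;\bg\in\ell^1}\Bigl(G(\bg)-\tfrac12\textstyle\sum_p m_p\gamma_p\Bigr),
\end{equation*}
which identifies $V$ as the concave Legendre transform of $G$ with respect to the pairing $\langle\bg,\bm\rangle=\sum_p m_p\gamma_p$. By hypothesis $G$ is concave on the open cone, and the G\^ateaux differentiability of $F$ transfers to $G$ through the chain rule, so the Fenchel--Moreau biconjugation theorem recovers $G$ from $V$ and gives
\begin{equation*}
G(\bg)=\inf_{\bm\in\mathcal M}\Bigl(V(\bm)+\tfrac12\textstyle\sum_p m_p\gamma_p\Bigr),
\end{equation*}
which is \eqref{duality:eq2} after reverting to $\bb$.

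The main obstacle is the infinite-dimensional Laplace asymptotic in step~$(i)$: classical Varadhan-type statements are finite-dimensional, so truncation to $p\le K$ is essential and one must quantify both the tail error (via the explicit $2^{-p/2}$ decay in (A1)) and the commutation of $\sup_\bb$ with $\lim_K$, which requires that near-optimal temperatures have effective support on finitely many coordinates. A smaller technical point in $(ii)$ is that the supremum in $(i)$ is taken over an open cone; continuity of $F$ (itself a consequence of (A1)) permits a harmless passage to the closure before applying Fenchel--Moreau.
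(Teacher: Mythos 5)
Your fixed-$K$ step in part $(i)$ (Gaussian linearization of the square plus a Laplace argument) is a legitimate continuum variant of what the paper does: the paper partitions the values of $H_{N,p}(\sigma)/N$ into $\varepsilon$-cells and completes the square cell by cell, and both routes rest on the same a.s.\ uniform bounds $\sup_{\sigma}|H_{N,p}(\sigma)|/N\le b_p\le C_12^{-p/4}$ that come from (A1) via Borel--Cantelli (Lemma~\ref{add}); you would need exactly those bounds to dominate the Gaussian tail in $\bb$ and to get local uniform convergence of $F_N$ for the Laplace upper bound. The genuine gap is your passage $K\to\infty$. You assert that (A1) makes the contribution of the modes $p>K$ to $V_N$ vanish uniformly in $N$, but that contribution is $\frac12\sum_{p>K}m_p^{-1}\bigl(H_{N,p}(\sigma)/N\bigr)^2\le\frac12\sum_{p>K}b_p^2/m_p$, and $\mathcal{M}$ imposes no lower bound whatsoever on $m_p$: if $m_p\to0$ faster than $2^{-p/2}$ this tail is not small and may be infinite, so the claimed uniform vanishing is false and your upper bound $\limsup_N V_N(\bm)\le\sup_{\bb}\bigl(F(\bb)-\frac12\sum_p\beta_p^2m_p\bigr)$ is not established for general $\bm\in\mathcal{M}$ (recall the theorem allows $V=+\infty$). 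The paper does not use any uniform tail estimate here: since the squared terms are nonnegative, $V_{N,k}$ increases to $V_N$ in $k$, and the infinite system is reached through this monotonicity together with the continuity-at-$\boldsymbol{0}$ statement of Lemma~\ref{add2}, which lets the finite-dimensional suprema be absorbed into the full supremum; your sketch needs a substitute for this step, and (A1) alone does not provide one.

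For part $(ii)$ your route is genuinely different from the paper's, but as written it is only a pointer to Fenchel--Moreau, not a proof. To invoke biconjugation you must fix a dual pairing (for $\bg=\bb^2$ the natural one is $(\ell^1,\ell^\infty)$), extend $G(\bg)=F(\bg^{1/2})$ beyond the open cone (by continuity up to the boundary, $-\infty$ outside) and check upper semicontinuity and properness; and the biconjugate is then an infimum over the dual space, which contains multipliers with zero or negative coordinates and does not contain all of $\mathcal{M}$ (unbounded positive sequences), so you must separately argue---via the elementary bound $V(\bm)+\frac12\sum_p\beta_p^2m_p\ge F(\bb)$, which holds for every $\bm$---that restricting to strictly positive $\bm\in\mathcal{M}$ changes nothing and that nonpositive coordinates can be discarded. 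None of this appears in your sketch. Moreover, G\^ateaux differentiability plays no role in Fenchel--Moreau, so your chain-rule remark is a non sequitur; in the paper differentiability is exactly what produces the candidate conjugate point $m_p=\beta_p^{-1}\partial_pF(\bb)$, and the degenerate coordinates with $\partial_pF(\bb)=0$ are handled by letting $m_p\downarrow0$---an elementary first-order/concavity argument that avoids all of the functional-analytic bookkeeping your route requires. If you carry out those checks, your argument for $(ii)$ can be completed (and would, interestingly, not need differentiability at all), but at present the load-bearing steps are missing.
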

	 
	 Theorem \ref{duality} establishes the Legendre duality between $F$ and $V$, where the temperature and the scaled energy are conjugated variables. 
	 
	 \begin{remark}
	 	\rm
	 	It is clear that the variational representation \eqref{duality:eq2} is concave in $\bb^{1/2}$. A simple application of H\"{o}lder's inequality also implies the convexity of $F$. Such convexity can  be obtained from the variational formula of \eqref{duality:eq2}. Indeed, if we set $\boldsymbol{t}=(t_p)_{p\geq 1}$ for $t_p=\beta_p^2m_p$,
	 	\begin{align*}
	 	\inf_{\bm\in \mathcal{M} }\Bigl(V(\bm)+\frac{1}{2}\sum_{p\geq 1} \beta_p^2 m_p\Bigr)=\inf_{\boldsymbol{t}\in \mathcal{M} }\Bigl(U\Bigl(\frac{\bb^2}{\boldsymbol{t}}\Bigr)+\frac{1}{2}\sum_{p\geq 1} t_p\Bigr),
	 	\end{align*} 
	 	where $U(\bm):=V(\bm^{-1})$ for any $\bm\in\mathcal{M} .$
	 	For any $0<c<1$, $\bb,\bb'\in\mathcal{B} $ and $\boldsymbol{t},\boldsymbol{t}'\in\mathcal{M} $,  the Cauchy-Schwarz inequality says
	 	\begin{align*}
	 	\frac{((1-c)\beta_p+c\beta_p')^2}{(1-c)t_p+c t_p'}&\leq (1-c)\frac{\beta_p^2}{t_p}+c\frac{{\beta_p'}^2}{t_p'}.
	 	\end{align*}
	 	Observe that from \eqref{duality:eq1}, $U$ is nondecreasing in each coordinate and convex. This implies that
	 	\begin{align*}
	 	&U\Bigl(\frac{((1-c  )\bb+c  \bb')^2}{(1-c  )\boldsymbol{t}+c  \boldsymbol{t}'}\Bigr)+\frac{1}{2}\sum_{p\geq 1}( (1-c  )t_p+c   t_p')\\
	 	&\leq (1-c  )\Bigl(U\Bigl(\frac{\bb^2}{\boldsymbol{t}}\Bigr)+\frac{1}{2}\sum_{p\geq 1} t_p\Bigr)+c  \Bigl(U\Bigl(\frac{{\bb'}^2}{\boldsymbol{t}'}\Bigr)+\frac{1}{2}\sum_{p\geq 1} t_p'\Bigr),
	 	\end{align*}
	 	which clearly implies the convexity of $F$ by taking infimum on both sides.
	 \end{remark} 
	 
	 The next theorem describes the optimizers of the above two variational problems. Assume that $F$ is G\^{a}teaux differentiable on $\mathcal{B} $. Denote by $\partial_pF$ the partial derivative of $F$ with respect to $\beta_p.$ We define
	 \begin{align*}
	 \mathcal{B} _0&=\bigg \{\bb\in\mathcal{B} :\beta_p^{-1}\partial_pF(\bb)>0,\,\,\forall p\geq 1\bigg\}
	 \end{align*}
	 and
	 \begin{align*}
	 \mathcal{M} _0&=\bigg \{\bm\in\mathcal{M} :m_p=\beta_p^{-1}\partial_pF(\bb)>0,\,\,\forall p\geq 1\,\,\mbox{for some $\bb\in\mathcal{B} _0$}\bigg \}.
	 \end{align*} 
	 
	 \begin{theorem}[Optimality]\label{thm:op}
	 	Given the assumptions (A0), (A1) and (A2), if $F$ is G\^{a}teaux differentiable everywhere in $\mathcal{B}$ and $F(\bb^{1/2})$ is concave on $\mathcal{B} ,$ then we have
	 	\begin{itemize}
	 		\item[$(i)$] For any $\bm\in\mathcal{M} _0,$
	 		\begin{align}
	 		\label{thm:op:eq1}
	 		V(\bm)&=\max_{\bb\in\mathcal{B} _0}\Bigl(F(\bb)-\frac{1}{2}\sum_{p\geq 1} \beta_p^2 m_p\Bigr)
	 		\end{align}
	 		and any $\bb\in\mathcal{B} $ satisfying $m_p=\beta_p^{-1}\partial_pF(\bb)$ for all $p\geq 1$ is a maximizer.
	 		\item[$(ii)$] For any $\bb\in \mathcal{B} _0,$
	 		\begin{align}\label{thm:op:eq2}
	 		F(\bb)&=\min_{\bm\in \mathcal{M} _0}\Bigl(V(\bm)+\frac{1}{2}\sum_{p\geq 1} \beta_p^2 m_p\Bigr)
	 		\end{align}
	 		and $\bm\in\mathcal{M} $ defined by $m_p=\beta_p^{-1}\partial_pF(\bb)$ for all $p\geq 1$ is a minimizer.
	 	\end{itemize}   	 
	 \end{theorem}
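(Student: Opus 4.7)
Given Theorem \ref{duality}, the substance of Theorem \ref{thm:op} is that the $\sup$ and $\inf$ in \eqref{duality:eq1}--\eqref{duality:eq2} are actually attained, at points given explicitly by the gradient condition $m_p=\beta_p^{-1}\partial_pF(\bb)$, and that the optimization can be restricted to $\mathcal{B}_0$ and $\mathcal{M}_0$. My plan is to reduce both parts to the first-order optimality condition for a standard concave Legendre transform after the substitution $\alpha_p=\beta_p^2$, $G(\boldsymbol{\alpha}):=F(\boldsymbol{\alpha}^{1/2})$. Under the hypothesis $G$ is concave on $\mathcal{B}$, and the chain rule gives $\partial_{\alpha_p}G(\bb^2)=\partial_pF(\bb)/(2\beta_p)$, so \eqref{duality:eq1} becomes the Legendre transform of the concave $G$ evaluated at $\bm/2$.

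For part $(i)$, fix $\bm\in\mathcal{M}_0$; by definition there exists $\bb\in\mathcal{B}_0$ such that $\partial_{\alpha_p}G(\boldsymbol{\alpha}^{*})=m_p/2$ at $\boldsymbol{\alpha}^{*}=\bb^2$. For any $\boldsymbol{\alpha}\in\mathcal{B}$, concavity of $t\mapsto G((1-t)\boldsymbol{\alpha}^{*}+t\boldsymbol{\alpha})$ on $[0,1]$ places its graph below the tangent line at $t=0$, and identifying the tangent slope with $\sum_{p\geq 1}\partial_{\alpha_p}G(\boldsymbol{\alpha}^{*})(\alpha_p-\alpha_p^{*})$ yields the subgradient inequality
\begin{equation*}
G(\boldsymbol{\alpha})-\frac{1}{2}\sum_{p\geq 1}\alpha_pm_p\leq G(\boldsymbol{\alpha}^{*})-\frac{1}{2}\sum_{p\geq 1}\alpha_p^{*}m_p.
\end{equation*}
Thus $\boldsymbol{\alpha}^{*}$ (equivalently $\bb$) achieves the supremum in Theorem~\ref{duality}$(i)$, which together with that theorem establishes \eqref{thm:op:eq1} with the maximum attained inside $\mathcal{B}_0$.

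For part $(ii)$, given $\bb\in\mathcal{B}_0$, define $m_p:=\beta_p^{-1}\partial_pF(\bb)$; the condition $\bb\in\mathcal{B}_0$ guarantees $m_p>0$, so $\bm\in\mathcal{M}_0$. Applying part $(i)$ to this compatible pair gives $V(\bm)+\frac{1}{2}\sum_{p\geq 1}\beta_p^2m_p=F(\bb)$. Theorem~\ref{duality}$(ii)$ already provides the bound $F(\bb)\leq V(\bm')+\frac{1}{2}\sum_{p\geq 1}\beta_p^2m_p'$ for every $\bm'\in\mathcal{M}\supset\mathcal{M}_0$, so our $\bm$ is a minimizer and the infimum may be restricted to $\mathcal{M}_0$, proving \eqref{thm:op:eq2}.

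The delicate step is identifying the directional derivative of $G$ at $\boldsymbol{\alpha}^{*}$ in the direction $\boldsymbol{\alpha}-\boldsymbol{\alpha}^{*}$ with the coordinatewise series $\sum_{p\geq 1}\partial_{\alpha_p}G(\boldsymbol{\alpha}^{*})(\alpha_p-\alpha_p^{*})$: G\^ateaux differentiability provides the one-dimensional support line and linearity of the derivative in the direction, but not, a priori, an absolutely convergent coordinatewise expansion in the infinite-dimensional $\ell^2$ setting. The natural remedy is a truncation argument — establish the subgradient inequality first for finitely supported perturbations $\boldsymbol{\alpha}-\boldsymbol{\alpha}^{*}$, where the series representation is trivial, and then pass to the limit using continuity of $G$ along the segment joining $\boldsymbol{\alpha}^{*}$ and $\boldsymbol{\alpha}$, which follows from concavity together with the summability and tail control built into assumption (A1).
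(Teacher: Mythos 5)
Your argument is correct and is essentially the paper's own proof: the paper establishes exactly your subgradient inequality in the second half of the proof of Theorem \ref{duality}, with the concave function $L(\boldsymbol{t})=F(\boldsymbol{t}^{1/2})-\frac{1}{2}\sum_{p\geq 1}t_p m_p$ playing the role of your $G(\boldsymbol{\alpha})-\frac{1}{2}\sum_{p\geq 1}\alpha_p m_p$ (your substitution $\alpha_p=\beta_p^2$ is the paper's $\boldsymbol{t}=\bb^2$), and it then combines the resulting identity $V(\bm)=F(\bb)-\frac{1}{2}\sum_{p\geq 1}\beta_p^2 m_p$ with the two bounds from Theorem \ref{duality} exactly as you do. Your closing remark on justifying the coordinatewise first-order expansion in the $\ell^2$ setting via finitely supported perturbations is extra care on a step the paper asserts without comment, not a divergence in method.
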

	 
	 A word of comment is needed here. The dual parameter $\bm$ plays the role of a scaled energy.  If both free energies $F$ and $V$ are comparable, that is,  if the energies satisfy
	 $$ \beta_{p} H_{N,p} \sim \frac{1}{m_{p}} H_{N,p}^{2},$$
	 we can see that the parameter $m_{p}$ has the same scale as $H_{N,p}/\beta_{p}$. 
	 
	 Remark that although here we present results for the system with an infinite sequence of the Hamiltonians $(H_{N,p})_{p\geq 1}$, the statements of Theorems \ref{duality} and \ref{thm:op} remain valid if the system involves only a finite number of Hamiltonians, $(H_{N,p})_{1\leq p\leq s}$, for some fixed $s\in\mathbb{N}$ independent of $N.$ The case $s=1$ reads as follows.
	 
	 Assume that $H_N$ is a Hamiltonian indexed by the spin configuration space $\Sigma_N$ and is independent of $\nu_N$. Define the free energies by
	 \begin{align}
	 \begin{split}
	 \label{subsec:eq1}
	 F_N(\beta)&=\frac{1}{N}\log\int_{\Sigma_N}\exp\beta H_N(\sigma)\nu_N(d\sigma),\,\,\forall \beta\in\mathbb{R},\\
	 V_N(m)&=\frac{1}{N}\log \int_{\Sigma_N}\exp \frac{N}{2m}\Bigl(\frac{H_N(\sigma)}{N}\Bigr)^2\nu_N(d\sigma),\,\,\forall m>0.
	 \end{split}
	 \end{align}
	 Suppose that (A0), (A1) and (A2) are valid for $H_N.$ Denote by $F$ the limit of $(F_N).$ From Theorem \ref{duality}, the limit of $V_N$ exists, which is denoted by $V$. Theorem \ref{thm:op} becomes:
	 \begin{theorem}
	 	\label{thm:op:1}
	 	If $F(\beta)$ is differentiable and $F(\beta^{1/2})$ is concave in $\beta>0$, then for any $\beta\in \mathcal{B} _0:=\{\beta'>0:F'(\beta')>0\}$ and $m\in \mathcal{M} _0:=\{F'(\beta')/\beta':\mbox{for some $\beta'>0$}\},$ we have
	 	\begin{align}
	 	\begin{split}\label{thm5:eq2}
	 	V(m)&=\max_{\beta'\in \mathcal{B} _0}\Bigl(F(\beta')-\frac{{\beta'}^2m}{2}\Bigr),
	 	\end{split}\\
	 	\begin{split}\label{thm5:eq1}
	 	F(\beta)&=\min_{m'\in \mathcal{M} _0}\Bigl(V(m')+\frac{\beta^2m'}{2}\Bigr),
	 	\end{split}
	 	\end{align}
	 	where any $\beta'>0$ satisfying $m=F'(\beta')/\beta'$ is the maximizer of \eqref{thm5:eq2} and $m'=F'(\beta)/\beta$ is the minimizer of \eqref{thm5:eq1}.  
	 \end{theorem}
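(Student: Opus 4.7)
The plan is to derive Theorem \ref{thm:op:1} as the one-variable specialization of Theorem \ref{thm:op} with $s=1$; the remark preceding the theorem guarantees that the statements of Theorems \ref{duality} and \ref{thm:op} remain valid in this setting. Since in a single variable the manipulations collapse to classical convex analysis on the real line, I will execute them directly from Theorem \ref{duality}.

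First, reparametrize: set $\gamma := \beta^2$ and $G(\gamma) := F(\sqrt{\gamma})$ for $\gamma > 0$. By hypothesis $G$ is concave, and by differentiability of $F$ it is differentiable on $(0,\infty)$ with $G'(\gamma) = F'(\sqrt{\gamma})/(2\sqrt{\gamma})$. Specializing Theorem \ref{duality} to $s=1$ yields the pair of variational equalities
\[
V(m) = \sup_{\gamma > 0}\Bigl(G(\gamma) - \tfrac{\gamma m}{2}\Bigr),\qquad G(\gamma) = \inf_{m > 0}\Bigl(V(m) + \tfrac{\gamma m}{2}\Bigr),
\]
which is exactly the Legendre duality between the concave function $G$ and its concave conjugate $-V$.

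Next, identify the optimizers through the first-order condition. Fix $\beta \in \mathcal{B}_0$ and put $m := F'(\beta)/\beta$, so $m = 2G'(\beta^2) > 0$ and hence $m \in \mathcal{M}_0$ by definition. Concavity of $G$ gives
\[
G(\gamma) - \tfrac{\gamma m}{2} \leq G(\beta^2) - \tfrac{\beta^2 m}{2},\qquad \gamma > 0,
\]
so the supremum defining $V(m)$ is attained at $\gamma = \beta^2$, which yields $V(m) + \beta^2 m/2 = F(\beta)$. Combined with the identity $F(\beta) = \inf_{m' > 0}(V(m') + \beta^2 m'/2)$ from above, this shows the constructed $m \in \mathcal{M}_0$ is a minimizer in \eqref{thm5:eq1}. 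The symmetric argument, starting from $m \in \mathcal{M}_0$ with a witness $\beta' > 0$ such that $m = F'(\beta')/\beta'$ and noting that $F'(\beta') = \beta' m > 0$ forces $\beta' \in \mathcal{B}_0$, produces the maximizer in \eqref{thm5:eq2}.

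The main obstacle in this plan is not the convex-analytic step, which is routine once Theorem \ref{duality} is available, but the upgrade of the sup and inf over $\mathcal{B}$ and $\mathcal{M}$ to a max and min over the restricted sets $\mathcal{B}_0$ and $\mathcal{M}_0$. This is settled by the explicit construction of optimizers above, since the values produced already coincide with $V(m)$ and $F(\beta)$. A minor subtlety is non-uniqueness when $G$ has a flat piece: several $\beta'$ may satisfy $m = F'(\beta')/\beta'$, but the inequality above confirms each such $\beta'$ is a maximizer, consistent with the "any $\beta'$" phrasing in the statement.
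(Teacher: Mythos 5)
Your proposal is correct and follows essentially the same route as the paper: the paper obtains Theorem \ref{thm:op:1} as the $s=1$ specialization of Theorem \ref{thm:op}, whose proof is exactly the change of variables $\boldsymbol{t}=\bb^2$ together with the concavity gradient inequality for $F(\boldsymbol{t}^{1/2})-\frac{1}{2}\sum_p t_p m_p$ at the first-order stationary point, which is precisely your argument with $\gamma=\beta^2$ and $G(\gamma)=F(\sqrt{\gamma})$. Your explicit handling of the passage from the supremum over $\mathcal{B}$ to the maximum over $\mathcal{B}_0$ (and likewise for $\mathcal{M}_0$) matches the paper's observation that $C(\bb)=\emptyset$ on $\mathcal{B}_0$, so $V(m)=F(\beta)-\beta^2 m/2$ at the constructed optimizers.
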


	 \subsection{Examples}\label{sec:Examples}
	 
	 We now discuss the Legendre structure established in the previous section for some mean-field spin glass models. They include the random energy model (REM) and the mixed $p$-spin models with both Ising and spherical spin configuration spaces. With the representations in Theorem \ref{thm:op}, we will show that the limiting free energy can be naturally written as a minimization problem that is related to Parisi's formulation of the same quantity.

\subsubsection{Random energy model}\label{sec:REM}

	 The REM model is defined on the configuration space $\Sigma_N:=\{-1,1\}^N$ and its Hamiltonian $H_N$ is a family of Gaussian process with covariance structure $\e H_N(\sigma^1)H_N(\sigma^2)=N\delta_{\sigma^1,\sigma^2}$ for all $\sigma^1,\sigma^2\in\Sigma_N,$ where $\delta_{\sigma^1,\sigma^2}$ equals $1$ if $\sigma^1=\sigma^2$ and $0$ otherwise. Let $\nu_N$ be the uniform probability measure on $\Sigma_N.$ It is well-known (see for instance \cite{T03}) that the conditions (A1) and (A2) hold and the limiting free energy has an explicit expression, for $\beta_c:=\sqrt{2\log 2},$
	 \begin{align}\label{eq5}
	 F(\beta)=\lim_{N\rightarrow\infty}\frac{1}{N}\log\sum_{\sigma}\frac{1}{2^N}\exp\beta H_N(\sigma) =
	 \left\{\begin{array}{ll}
	 \frac{\beta^2}{2},&\mbox{if $\beta\leq \beta_c$},\\
	 \\
	 \beta \beta_c-\log 2,&\mbox{if $\beta>\beta_c$}.
	 \end{array}\right.
	 \end{align}
	 Since $F(\beta^{1/2})$ is concave in $\beta>0,$ using Theorem \ref{thm:op:1}, one readily sees that $F$ and $V$ satisfy the Legendre duality with $\mathcal{B} _0=(0,\infty)$ and $\mathcal{M} _0=(0,1]$,
	 \begin{align*}
	 V(m)&=\max_{\beta>0}\Bigl(F(\beta)-\frac{{\beta}^2m}{2}\Bigr),\,\,0<m\leq 1,\\
	 F(\beta)&=\min_{0<m\leq 1}\Bigl(V(m)+\frac{\beta^2 m}{2}\Bigr),\,\,\beta>0.
	 \end{align*}
	 Note that a direct computation gives $V(m)=\log 2/m-\log 2$ for $0<m\leq 1$. Plugging this into the variational formula for $F$, we can express the limiting free energy of the REM in Parisi's formulation, previously derived in \cite{BK,G13}:

	 \begin{theorem}[Parisi formula for the REM]
	 	We have that
	 	\begin{align*}
	 	F(\beta)&=\inf_{0<m\leq 1}\Bigl(\frac{\beta^2m}{2}+\frac{\log 2}{m}-\log 2\Bigr).
	 	\end{align*}
	 \end{theorem}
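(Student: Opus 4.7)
The plan is to apply Theorem \ref{thm:op:1} with $F$ given by the explicit REM expression \eqref{eq5}, so that the claimed Parisi-type formula emerges as the dual representation once $V(m)$ is computed. First I would verify the hypotheses of Theorem \ref{thm:op:1}. Assumptions (A0), (A1), (A2) follow respectively from the Gaussian symmetry of $H_N$, the standard bound on the expected maximum of $2^N$ i.i.d.\ centered Gaussians of variance $N$ combined with Gaussian concentration, and the classical REM free-energy computation. The differentiability of $F$ on $(0,\infty)$ is visible from \eqref{eq5} since the two branches glue smoothly with matching derivative $\beta_c$ at $\beta=\beta_c$. The concavity of $F(\beta^{1/2})$ is equally direct: on $\beta\le\beta_c^2$ it reduces to $\beta/2$ which is linear, on $\beta>\beta_c^2$ it becomes $\beta_c\sqrt{\beta}-\log 2$ which is concave, and the one-sided derivatives agree at the junction. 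Along the way I would identify the optimization domains using $F'$: the ratio $F'(\beta)/\beta$ equals $1$ on $(0,\beta_c]$ and equals $\beta_c/\beta\in(0,1)$ on $(\beta_c,\infty)$, giving $\mathcal{B}_0=(0,\infty)$ and $\mathcal{M}_0=(0,1]$.

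The central computation is $V(m)$ for $m\in(0,1]$. Rather than working directly from the definition \eqref{eq:VNdef}, I would invoke Theorem \ref{duality}(i), which gives
\begin{align*}
V(m)=\sup_{\beta>0}\Bigl(F(\beta)-\frac{\beta^2 m}{2}\Bigr),
\end{align*}
reducing the problem to a one-variable optimization of a known piecewise function. Splitting into the two branches of \eqref{eq5}, the optimization on $(0,\beta_c]$ produces at best the value $\log 2\cdot(1-m)$ attained at $\beta=\beta_c$, while on $(\beta_c,\infty)$ the smooth concave parabola $\beta\beta_c-\log 2-\beta^2 m/2$ has its unique critical point at $\beta^{\star}=\beta_c/m\ge\beta_c$, where the value equals $\beta_c^2/(2m)-\log 2=\log 2/m-\log 2$. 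The latter dominates for $m\in(0,1]$ (the difference is $\log 2\cdot(1-m)^2/m\ge 0$), so $V(m)=\log 2/m-\log 2$.

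Substituting this expression into the dual representation $F(\beta)=\min_{m\in\mathcal{M}_0}(V(m)+\beta^2 m/2)$ from Theorem \ref{thm:op:1} immediately produces the claimed formula. The main step is not a deep obstacle but rather the bookkeeping of verifying the concavity and differentiability hypotheses and correctly identifying the branch on which the Legendre supremum defining $V(m)$ is realized; once Theorem \ref{thm:op:1} is in hand, the REM Parisi formula becomes essentially a one-variable calculus exercise.
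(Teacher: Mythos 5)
Your proposal is correct and follows essentially the same route as the paper: verify the hypotheses of Theorem \ref{thm:op:1} from the explicit expression \eqref{eq5} (noting $\mathcal{B}_0=(0,\infty)$, $\mathcal{M}_0=(0,1]$), compute $V(m)=\log 2/m-\log 2$ by the one-variable Legendre maximization of $F(\beta)-\beta^2m/2$ over the two branches, and plug this into the dual minimization formula. Your branch analysis and the verification of differentiability and of the concavity of $F(\beta^{1/2})$ match the paper's (brief) argument, just spelled out in more detail.
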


	 	 Remarkably, the same argument also yields the Parisi formula for the REM model with random external field, see \cite{AK}.

	 \subsubsection{Ising mixed $p$-spin model}\label{sec:Isingpspin}
	 
	 The Ising mixed $p$-spin model is defined on the same configuration space as the REM. The Hamiltonian $H_N$ is a centered Gaussian process indexed by $\Sigma_N$ and has the covariance structure, $$\e H_N(\sigma^1)H_N(\sigma^2)=N\xi(R_{1,2})$$
	 for $\sigma^1,\sigma^2\in\Sigma_N,$ where $\xi(s):=\sum_{p\geq 2}c_p^2s^p$ with $\sum_{p\geq 2}2^pc_p^2<\infty$ and $R_{1,2}:=N^{-1}\sum_{1\leq i\leq N}\sigma_i^1\sigma_i^2$ is the overlap between $\sigma^1$ and $\sigma^2.$ Using the same definitions as \eqref{subsec:eq1}, the conditions (A0), (A1) and (A2) are valid  for $H_N$. In particular, it is famously known that the limiting free energy can be computed through Parisi's formula (see \cite{P13,T11}), 
	 \begin{align}\label{eq:add}
	 F(\beta)&=\lim_{N\rightarrow\infty}\frac{1}{N}\log \sum_{\sigma}\frac{1}{2^N}\exp \beta H_N(\sigma)=\inf_{\alpha\in \mathcal{D}}\mathcal{P}_\beta(\alpha),
	 \end{align}
	 where letting $\mathcal{D}$ be the space of all probability distribution functions on $[0,1]$, $\mathcal{P}_\beta$ is defined as
	 \begin{align*}
	 \mathcal{P}_\beta(\alpha)&=\Phi_{\alpha,\beta}(0,0)-\frac{\beta^2}{2}\int_0^1\alpha(s)s\xi''(s)ds
	 \end{align*}
	 for $\Phi_{\alpha,\beta}$ being the solution to the Parisi PDE with boundary condition $\Phi_{\alpha,\beta}(1,x)=\log \cosh(x)$,
	 $$
	 \partial_s\Phi_{\alpha,\beta}=-\frac{\beta^2\xi''(s)}{2}\Bigl(\partial_{xx}^2\Phi_{\alpha,\beta}+\alpha(s)(\partial_x\Phi_{\alpha,\beta})^2\Bigr),\,\,(s,x)\in[0,1)\times\mathbb{R}.
	 $$
	 It is proven in \cite{AC14} that the Parisi formula has a unique minimizer, denoted by $\alpha_{P,\beta}.$ Set $\mathcal{D}_0$ the collection of all $\alpha_{P,\beta}$ associated to any temperatures $\beta>0.$  It was recently proved in \cite{AC15} that $F(\beta^{1/2})$ is a concave function in $\beta>0.$ As one can compute the derivative of $F$ from the Parisi formula (see \cite{P08}),
	  \begin{align}
	  \label{prop2:proof:eq2}
	  F'(\beta)&=\beta\int_0^1\alpha_{P,\beta}(s)\xi'(s)ds>0,\,\,\forall \beta>0,
	  \end{align}
	  it follows that the duality in Theorem \ref{thm:op:1}  holds, where $\mathcal{B}_0=(0,\infty)$ and $$\mathcal{M}_0=\bigg \{\int_0^1\alpha(s)\xi'(s)ds:\alpha\in \mathcal{D}_0\bigg \}.$$ Define the Legendre transform of $\mathcal{P}_\beta$ by
	  	\begin{align}\label{eq-1}
	  	\Gamma(\alpha)&=\sup_{\beta>0}\Bigl(\mathcal{P}_\beta(\alpha)-\frac{\beta^2}{2}\int_0^1\alpha(s)\xi'(s)ds\Bigr),\,\,\forall \alpha\in\mathcal{D}.
	  	\end{align}
	  Our result below says that the limiting free energy of the squared Hamiltonian $V$ can be identified as $\Gamma$, from which it allows to rewrite the duality obtained from Theorem \ref{thm:op:1} in terms of $\Gamma.$ This alternative expression was firstly conjectured by Guerra \cite{G15} and established in \cite{AC15}.
	  
	  \begin{theorem}[Legendre structure]\label{thm7}
	  	Given $\alpha\in\mathcal{D}_0$, $m:=\int_0^1\alpha(s)\xi'(s) ds$, we have that
	  		\begin{align}\label{prop2:eq2}
	  		V(m)&=\Gamma(\alpha).
	  		\end{align}
	  Moreover, for any $\alpha\in\mathcal{D}_0$ and $\beta\in\mathcal{B}_0$,  
	  	\begin{align}
	  	\begin{split}\label{thm7:eq1}
	    \Gamma(\alpha)&=\max_{\beta'\in\mathcal{B}_0}\Bigl(F(\beta')-\frac{{\beta'}^2}{2}\int_0^1\alpha(s)\xi'(s)ds\Bigr),
	    \end{split}\\
	    \begin{split}\label{thm7:eq2}
	  	F(\beta)&=\min_{m'\in\mathcal{M}_0}\Bigl(\Gamma(\alpha')+\frac{\beta^2}{2}\int_0^1\alpha'(s)\xi'(s)ds\Bigr),
	  	\end{split}
	  	\end{align}
	  	where the maximizer of the first variational principle is equal to $\beta$ if $\alpha=\alpha_{P,\beta}$ and the minimizer of the second one is given by $\alpha_{P,\beta}$.
	  \end{theorem}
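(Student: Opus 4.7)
The plan is to reduce Theorem \ref{thm7} to an application of Theorem \ref{thm:op:1} for the Ising mixed $p$-spin model ($s=1$), and then to identify the dual free energy $V$ with the Legendre transform $\Gamma$ defined in \eqref{eq-1}.

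First I would verify that the model fits the framework of Section \ref{sec1.1}. Since $H_N$ is a centered Gaussian process, (A0) is immediate, and the concentration bound (A1) is Borell-TIS together with the summability $\sum_{p\geq 2} 2^p c_p^2 < \infty$. Assumption (A2) is the almost sure Parisi formula \eqref{eq:add} (\cite{P13, T11}). Uniqueness of the Parisi measure from \cite{AC14}, the derivative identity \eqref{prop2:proof:eq2}, and the concavity of $\beta \mapsto F(\beta^{1/2})$ from \cite{AC15} then verify the remaining hypotheses of Theorem \ref{thm:op:1}, so that the Legendre duality between $F$ and $V$ holds with $\mathcal{B}_0 = (0,\infty)$ and $\mathcal{M}_0 = \bigl\{\int_0^1 \alpha(s)\xi'(s)\,ds : \alpha \in \mathcal{D}_0\bigr\}$, and with optimizers linked by $m = F'(\beta)/\beta = \int_0^1 \alpha_{P,\beta}(s)\xi'(s)\,ds$.

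The heart of the argument is the identification \eqref{prop2:eq2}. Fix $\alpha \in \mathcal{D}_0$, let $\beta \in \mathcal{B}_0$ be the temperature with $\alpha = \alpha_{P,\beta}$, and set $m = \int_0^1 \alpha(s)\xi'(s)\,ds$. One inequality is immediate: since $\mathcal{P}_{\beta'}(\alpha) \geq F(\beta')$ for every $\beta'$ by \eqref{eq:add}, the supremum in \eqref{eq-1} dominates termwise the sup in Theorem \ref{thm:op:1} defining $V(m)$, giving $\Gamma(\alpha) \geq V(m)$. For the reverse, Theorem \ref{thm:op:1} yields $V(m) = F(\beta) - \frac{\beta^2 m}{2} = \mathcal{P}_\beta(\alpha) - \frac{\beta^2 m}{2}$ since $\mathcal{P}_\beta(\alpha_{P,\beta}) = F(\beta)$. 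Thus $V(m)$ is realized as the value at $\beta' = \beta$ of $\beta' \mapsto \mathcal{P}_{\beta'}(\alpha) - \frac{{\beta'}^2 m}{2}$. By an envelope argument applied to $F(\beta) = \min_{\alpha'}\mathcal{P}_\beta(\alpha')$ at the minimizer $\alpha' = \alpha_{P,\beta}$,
\[
\partial_{\beta'}\mathcal{P}_{\beta'}(\alpha)\Big|_{\beta' = \beta} = F'(\beta) = \beta m,
\]
so $\beta' = \beta$ is a critical point of this map. To upgrade it to a global maximum I would invoke the concavity of $t \mapsto \mathcal{P}_{\sqrt{t}}(\alpha)$ on $(0,\infty)$ for fixed $\alpha \in \mathcal{D}$. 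Given this concavity, the tangent line at $t = \beta^2$ has height $F(\beta)$ and slope $m/2$, whence $\mathcal{P}_{\beta'}(\alpha) \leq F(\beta) + \frac{m}{2}({\beta'}^2 - \beta^2)$. Rearranging and taking the sup in $\beta'$ yields $\Gamma(\alpha) \leq V(m)$, completing the identification \eqref{prop2:eq2}. With \eqref{prop2:eq2} in hand, the variational formulas \eqref{thm7:eq1} and \eqref{thm7:eq2} follow by substituting $\Gamma$ for $V$ in the two formulas of Theorem \ref{thm:op:1}, the optimizer identifications being the translations of the optimality statements in Theorem \ref{thm:op:1} via the bijection $\mathcal{B}_0 \ni \beta \leftrightarrow \alpha_{P,\beta} \in \mathcal{D}_0$.

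The principal obstacle is the concavity of $t \mapsto \mathcal{P}_{\sqrt{t}}(\alpha)$ at fixed $\alpha$, which does not reduce to the concavity of $F(\beta^{1/2})$ from \cite{AC15} (the latter uses the joint minimization, which is unavailable for a non-minimizing $\alpha$). The natural strategy is to express $\mathcal{P}_\beta(\alpha)$ as the thermodynamic limit of the free energy of a Ruelle-probability-cascade model whose overlap distribution is prescribed by $\alpha$, and to apply the Gaussian interpolation argument of \cite{AC15} directly to this companion system. Should this route meet technical obstacles, an alternative is to exploit the first-order condition at any purported maximizer $\beta^*$ of \eqref{eq-1} together with \eqref{prop2:proof:eq2} and the uniqueness of the Parisi measure to force $\beta^* = \beta$ directly.
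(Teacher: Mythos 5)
Your reduction to Theorem \ref{thm:op:1} is sound and matches the paper's first step: verifying (A0)--(A2), concavity of $F(\beta^{1/2})$ and \eqref{prop2:proof:eq2} gives $V(m)=\max_{\beta'\in\mathcal{B}_0}\bigl(F(\beta')-\tfrac{{\beta'}^2m}{2}\bigr)=F(\beta)-\tfrac{\beta^2 m}{2}$, and the inequality $\Gamma(\alpha)\geq V(m)$ from $\mathcal{P}_{\beta'}(\alpha)\geq F(\beta')$ is correct. The genuine gap is the reverse inequality, i.e.\ showing that the supremum in \eqref{eq-1} is attained at $\beta'=\beta$. You rightly identify this as the crux and that it does not reduce to the concavity of $F(\beta^{1/2})$, but neither of your proposed routes closes it. The first (realize $\mathcal{P}_{\beta}(\alpha)$ as the limiting free energy of a Ruelle-cascade companion system and rerun the argument of \cite{AC15}) is only a sketch of a nontrivial project, and it is not clear that the concavity proof of \cite{AC15} transfers to that system without essentially redoing the PDE analysis. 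The second route is flawed: at a purported maximizer $\beta^*$ of $\beta'\mapsto \mathcal{P}_{\beta'}(\alpha)-\tfrac{{\beta'}^2m}{2}$ the first-order condition involves $\partial_{\beta'}\mathcal{P}_{\beta'}(\alpha)$ at the \emph{fixed} $\alpha$, which is not the minimizer at $\beta^*$; hence \eqref{prop2:proof:eq2} and uniqueness of $\alpha_{P,\beta^*}$ say nothing about this equation, and even uniqueness of the critical point would not yield a global maximum without some monotonicity or concavity information.

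For the record, the missing ingredient exists and is exactly what the paper invokes: by \cite[Eq.\ (13)]{AC15}, $\frac{d}{d\beta'}\bigl(\mathcal{P}_{\beta'}(\alpha)-\tfrac{{\beta'}^2}{2}\int_0^1\alpha\xi'\bigr)=-\beta'\int_0^1\xi'(s)\bigl(\mathbb{E}u_{\alpha,\beta'}(s)^2-s\bigr)d\alpha(s)$; by \cite[Eq.\ (28)]{AC15}, $\mathbb{E}u_{\alpha,\beta'}(s)^2$ is nondecreasing in $\beta'$; and by \cite[Proposition 1]{C14}, $\mathbb{E}u_{\alpha,\beta}(s)^2=s$ on the support of $\alpha$ when $\beta'=\beta$. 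Together these show the derivative is $\geq 0$ for $\beta'<\beta$ and $\leq 0$ for $\beta'>\beta$, so $\beta'=\beta$ is a global maximizer; note this is essentially equivalent to the concavity in $t={\beta'}^2$ you were after, since the $t$-derivative equals $-\tfrac12\int_0^1\xi'(s)\bigl(\mathbb{E}u_{\alpha,\sqrt{t}}(s)^2-s\bigr)d\alpha(s)$, which is nonincreasing in $t$. With this cited input your argument closes and coincides with the paper's proof; without it, the proposal has a genuine gap at its central step.
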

	 	

	 \begin{proof}
	    First we verify \eqref{prop2:eq2}. Let $m=\int_0^1\alpha(s)\xi'(s) ds$, where $\alpha=\alpha_{P,\beta}$ for some $\beta>0.$ Note that from \eqref{thm5:eq2}, the characterization of the maximizer of \eqref{thm5:eq2} and the derivative of $F$ in \eqref{prop2:proof:eq2}, we have
	    \begin{align}
	    \begin{split}\label{thm7:proof:eq3}
	    V(m)&=\max_{\beta'\in\mathcal{B}_0}\Bigl(F(\beta')-\frac{{\beta'}^2}{2}\int_0^1\alpha(s)\xi'(s)ds\Bigr)\\
	    &=F(\beta)-\frac{\beta^2}{2}\int_0^1\alpha(s)\xi'(s)ds.
	    \end{split}
	    \end{align}
	    To see that this matches $\Gamma(\alpha)$, we shall need a few properties of the Parisi functional $\mathcal{P}_\beta(\alpha)$ previously established in \cite{C14,AC15}. First, from \cite[Equation (13)]{AC15}, for any $\beta'>0,$
	 	\begin{align}\label{prop2:proof:eq1}
	 	\frac{d}{d\beta'}\Bigl(\mathcal{P}_{\beta'}(\alpha)-\frac{{\beta'}^2}{2}\int_0^1\alpha(s)\xi'(s)ds\Bigr)&=-\beta'\int_0^1\xi'(s)\bigl(\e u_{\alpha,\beta'}(s)^2-s\bigr)d\alpha(s)\Bigr),
	 	\end{align}
	 	where $u_{\alpha,\beta'}$ is a uniformly bounded progressively measurable process with respect to the filtration generated by a standard Brownian motion.
	 	From \cite[Equation $(28)$]{AC15}, it is understood that for any $s\in[0,1],$ $\e u_{\alpha,\beta'}(s)^2$ is nondecreasing in $\beta'$. In addition, from \cite[Proposition~$1$]{C14}, we also have that $\e u_{\alpha,\beta'}(s)^2=s$ for any $s$ in the support of $\alpha$ if $\beta'=\beta$. As a result, the derivative \eqref{prop2:proof:eq1} is $\geq 0$ for $\beta'<\beta$ and $\leq 0$ if $\beta'>\beta.$ From the definition \eqref{eq-1} of $\Gamma(\alpha)$ and noting that $\alpha$ is the minimizer in Parisi's formula, this concludes 
	 	\begin{align*}
	 	\Gamma(\alpha)&=\mathcal{P}_\beta(\alpha)-\frac{\beta^2}{2}\int_0^1\alpha(s)\xi'(s)ds,
	 	\end{align*} 
	    which combined with \eqref{thm7:proof:eq3} finishes the proof of \eqref{prop2:eq2}.
	 
	 	Next, note that \eqref{thm7:eq1} follows straightforwardly from \eqref{prop2:eq2} and \eqref{thm7:proof:eq3}, while \eqref{thm7:eq2} can be obtained from \eqref{thm5:eq1} and \eqref{prop2:eq2},
	 		\begin{align*}
	 		F(\beta)&=\inf_{m\in\mathcal{M}_0}\Bigl(V(m)+\frac{\beta^2m}{2}\Bigr)\\
	 		&=\inf_{\beta'>0}\Bigl(\Gamma(\alpha_{P,\beta'})+\frac{\beta^2}{2}\int_0^1\alpha_{P,\beta'}(s)\xi'(s)ds\Bigr)\\
	 		&=\inf_{\alpha\in\mathcal{D}_0}\Bigl(\Gamma(\alpha)+\frac{\beta^2}{2}\int_0^1\alpha(s)\xi'(s)ds\Bigr)
	 		\end{align*}
	 	where the minimizer is obtained by  for $\alpha=\alpha_{P,\beta}$. This completes our proof.
	 \end{proof}

	 \subsubsection{Spherical mixed $p$-spin model}
	 
	 Let $\Sigma_N=\{\sigma\in\mathbb{R}^N:\sum_{i=1}^N\sigma_i^2=N\}$ and $\nu_N$ be a uniform probability measure on $\Sigma_N.$ For every $p\geq 1,$ set the pure $p$-spin Hamiltonian as
	 \begin{align*}
	 H_{N,p}(\sigma)&=\frac{1}{2^{p/2}N^{(p-1)/2}}\sum_{1\leq i_1,\ldots,i_p\leq N}g_{i_1,\ldots,i_p}\sigma_{i_1}\cdots\sigma_{i_p},
	 \end{align*}
	 where $g_{i_1,\ldots,i_p}$'s are i.i.d. standard Gaussian for all $1\leq i_1,\ldots,i_p\leq N$ and $p\geq 1.$ Throughout this subsection, we shall use the notations defined in Subsection \ref{sec1.1} with this family of Hamiltonians $(H_{N,p})_{p\geq 1}$. In this case, the assumption (A1) holds by standard Gaussian concentration of measure for Lipschitz functions and Dudley's entropy integral (see for instance \cite[Appendix]{ArnabChen15}). Assumption (A2) holds and the limiting free energy $F(\bb)$ can be computed by a spherical version of the Parisi formula \cite{TalagrandSphh,C13}. Here, Parisi's formula has a simpler expression, first derived by Crisanti-Sommers \cite{CS}. It reads
	 \begin{align}\label{spf}
	 F(\bb)=\inf_{\alpha\in\mathcal{C}} \mathcal{Q}_{\bb}(\alpha),
	 \end{align}
	 where $\mathcal{C}$ is the space of all distribution functions $\alpha$ on $[0,1]$ with $\alpha(\hat{q})=1$ for some $\hat{q}<1$ and setting $$\xi_{\bb}(q)=\sum_{p\geq 1}2^{-p}\beta_p^2q^p,$$ the functional $\mathcal Q_{\bb}$ is defined by
	 \begin{align*}
	 \mathcal{Q}_{\bb}(\alpha)&=\frac{1}{2}\Bigl(\int_0^1\alpha(q)\xi_{\bb}'(q)dq+\int_0^{\hat{q}}\frac{dq}{\int_q^1\alpha(s)ds}+\log(1-\hat{q})\Bigr).
	 \end{align*}
	 Note that for any fixed $\alpha$, this quantity is independent of the choice of $\hat{q}<1$ with $\alpha(\hat{q})=1.$ As this functional $\mathcal{Q}_{\bb}$ is strictly convex, it ensures that the optimization problem \eqref{spf} has a unique minimizer, denoted by $\alpha_{P,\bb}.$ If now one replaces $\bb$ by $\bb^{1/2},$ then $\mathcal{Q}_{\bb^{1/2}}(\alpha)$ is linear in $\bb\in\mathcal{B}$ and as a result, $F(\bb^{1/2})$ is concave on $\mathcal{B}.$ Consequently, knowing the fact that $$
	 \partial_pF(\bb)=\frac{p\beta_p}{2^p}\int_0^1\alpha_{P,\bb}(q)q^{p-1}dq>0
	 $$
	 as long as $\beta_p>0$ (see \cite{TalagrandSphh}), it implies that $\mathcal{B}_0=\mathcal{B}$ and
	 \begin{align*}
	 \mathcal{M}_0&=\Bigl\{\bm:m_p=\frac{p}{2^p}\int_0^1\alpha_{P,\bb}(q)q^{p-1}dq,\,\,\forall p\geq 1,\bb\in\mathcal{B}\Bigr\}.
	 \end{align*} Using Theorem \ref{thm:op}, we then obtain the Legendre duality between $V$ and $F.$ Let $\mathcal{C}_0$ be the collection of all $\alpha_{P,\bb}$ for some $\bb\in\mathcal{B}.$ Consider the Legendre transform of $\mathcal{Q}_{\bb}$,
	 \begin{align*}
	 {\Lambda}(\alpha)&:=\sup_{\bb\in\mathcal{B}}\Bigl(\mathcal{Q}_{\bb}(\alpha)-\frac{1}{2}\int_0^1\alpha(q)\xi_{\bb}'(q)dq\Bigr)\\
	 &=\int_0^{\hat{q}}\frac{dq}{\int_q^1\alpha(s)ds}+\log(1-\hat{q})
	 \end{align*}
	 for any $\alpha\in\mathcal{C}.$ We show that the free energy associated to the squared Hamiltonians $V$ is equal to $\Lambda$. This and the duality in Theorem \ref{thm:op} together imply that the Crisanti-Sommers formula can be regarded as the Legendre transform of $V.$ More precisely, below is the statement of our main result in this subsection.
	 
	 \begin{theorem}\label{thm10}
	 For any $\bm$ with $$
	 m_p=\frac{p}{2^p}\int_0^1\alpha(q)q^{p-1}dq,\,\,\forall p\geq 1,
	 $$ for some $\alpha\in\mathcal{C}_0$, we have
	 \begin{align}\label{thm10:eq-1}
	 V(\bm)&=\Lambda(\alpha).
	 \end{align}
	  Moreover, for any $\alpha\in\mathcal{C}_0$ and $\bb\in\mathcal{B}_0$,  
	  \begin{align}
	  \begin{split}\label{thm10:eq1}
	  \Lambda(\alpha)&=\max_{\bb'\in\mathcal{B}_0}\Bigl(F(\bb')-\frac{1}{2}\int_0^1\alpha(s)\xi_{\bb'}'(s)ds\Bigr),
	  \end{split}\\
	  \begin{split}\label{thm10:eq2}
	  F(\bb)&=\min_{\alpha'\in\mathcal{C}_0}\Bigl(\Lambda(\alpha')+\frac{1}{2}\int_0^1\alpha'(s)\xi_{\bb}'(s)ds\Bigr),
	  \end{split}
	  \end{align}
	  where the maximizer of the first variational principle is equal to $\bb$ if $\alpha=\alpha_{P,\bb}$ and the minimizer of the second one is given by $\alpha_{P,\bb}$.
	 \end{theorem}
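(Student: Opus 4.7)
The plan is to follow the template of Theorem \ref{thm7}, while exploiting the key simplification that the Crisanti--Sommers functional $\mathcal{Q}_{\bb}(\alpha)$ is affine in $(\beta_p^2)_{p\ge 1}$. Because of this, the delicate Brownian-motion calculus based on \eqref{prop2:proof:eq1} that powered the Ising case is unnecessary here: the identification $V(\bm)=\Lambda(\alpha)$ will come from a direct cancellation rather than from a derivative computation. Before invoking Theorem \ref{thm:op}, I would confirm its two hypotheses in this setting: G\^ateaux differentiability of $F$ on $\mathcal{B}$ follows from the Parisi formula together with uniqueness of $\alpha_{P,\bb}$, while concavity of $F(\bb^{1/2})$ is immediate from the linearity of $\mathcal{Q}_{\bb^{1/2}}(\alpha)$ in $\bb$, as already noted in the paragraph preceding the theorem.

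For \eqref{thm10:eq-1}, fix $\alpha=\alpha_{P,\bb}$ for some $\bb\in\mathcal{B}$ and set $m_p=(p/2^p)\int_0^1\alpha(q)q^{p-1}dq$. The derivative formula displayed just above Theorem \ref{thm10} identifies this with $\beta_p^{-1}\partial_pF(\bb)$, so $\bm\in\mathcal{M}_0$. A Fubini interchange gives $\sum_{p\ge 1}\beta_p'^2 m_p=\int_0^1 \alpha(q)\xi_{\bb'}'(q)\,dq$ for every $\bb'\in\mathcal{B}$, so Theorem \ref{thm:op}(i) becomes
\begin{align*}
V(\bm)=\sup_{\bb'\in\mathcal{B}_0}\Bigl(F(\bb')-\tfrac12\int_0^1 \alpha(q)\xi_{\bb'}'(q)\,dq\Bigr).
\end{align*}
The spherical Parisi formula \eqref{spf} yields $F(\bb')\le \mathcal{Q}_{\bb'}(\alpha)$, so the expression inside the supremum is bounded above by $\mathcal{Q}_{\bb'}(\alpha)-\tfrac12\int_0^1\alpha(q)\xi_{\bb'}'(q)\,dq$. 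Crucially, this upper bound is independent of $\bb'$, since $\mathcal{Q}_{\bb'}(\alpha)$ depends on $\bb'$ only through the term $\int \alpha\xi_{\bb'}'$; by the very definition of $\Lambda$ it equals $\Lambda(\alpha)$. Hence $V(\bm)\le \Lambda(\alpha)$, and equality at $\bb'=\bb$ is achieved because $F(\bb)=\mathcal{Q}_{\bb}(\alpha_{P,\bb})$. This proves \eqref{thm10:eq-1} and, in passing, identifies $\bb$ as the maximizer in \eqref{thm10:eq1}.

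With \eqref{thm10:eq-1} in hand, \eqref{thm10:eq1} follows by direct substitution into the representation of Theorem \ref{thm:op}(i), while \eqref{thm10:eq2} follows from Theorem \ref{thm:op}(ii) after rewriting the infimum over $\bm\in\mathcal{M}_0$ as an infimum over $\alpha\in\mathcal{C}_0$ under the bijection $\alpha\mapsto (m_p)_{p\ge 1}$, with the minimizer $\alpha_{P,\bb}$ inherited from the minimizer characterization in Theorem \ref{thm:op}(ii). The main obstacle I anticipate is bookkeeping rather than a substantive difficulty: one must verify that the $\ell^2$ summability encoded in $\mathcal{T}$ is preserved under the Fubini-type interchange above (which is immediate since the moments of $\alpha$ are bounded by $1$), and check that the correspondence $\alpha\leftrightarrow \bm$ between $\mathcal{C}_0$ and $\mathcal{M}_0$ is a genuine bijection so that the reparametrization in \eqref{thm10:eq2} is valid.
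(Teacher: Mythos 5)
Your proposal is correct and follows essentially the same route as the paper: identify $m_p=\beta_p^{-1}\partial_pF(\bb)$ via the derivative formula stated before the theorem, feed this into Theorem \ref{thm:op}, and use the Crisanti--Sommers structure (the $\bb'$-independence of $\mathcal{Q}_{\bb'}(\alpha)-\tfrac12\int_0^1\alpha(q)\xi_{\bb'}'(q)\,dq$ together with $F(\bb)=\mathcal{Q}_{\bb}(\alpha_{P,\bb})$) to identify the value with $\Lambda(\alpha)$, then transfer \eqref{thm:op:eq2} to \eqref{thm10:eq2} via the correspondence $\alpha\leftrightarrow\bm$. The only cosmetic difference is that you evaluate the supremum in \eqref{thm:op:eq1} by a two-sided sandwich ($F(\bb')\leq\mathcal{Q}_{\bb'}(\alpha)$ plus equality at $\bb'=\bb$), whereas the paper directly invokes the maximizer characterization of Theorem \ref{thm:op}(i) and then applies the Crisanti--Sommers formula.
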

	 
	 \begin{proof} Let $\bm\in\mathcal{M}_0$ satisfy $
	 	m_p=2^{-p}p\int_0^1\alpha(q)q^{p-1}dq$ for all $p\geq 1$ and some $\alpha\in\mathcal{C}_0$. Assume that $\alpha=\alpha_{P,\bb}$ for $\bb\in\mathcal{B}.$ Note that $m_p=\beta_p^{-1}\partial_pF(\bb).$ From \eqref{thm:op:eq1} and the characterization of the maximizer, we have that
	 	\begin{align*}
	 	V(\bm)&=\max_{\bb'\in\mathcal{B}_0}\Bigl(F(\bb')-\frac{1}{2}\sum_{p\geq 1}{\beta_p'}^2m_p\Bigr)\\
	 	&=F(\bb)-\frac{1}{2}\sum_{p\geq 1}\beta_p^2m_p\\
	 	&=F(\bb)-\frac{1}{2}\int_0^1\alpha(q)\xi_{\bb}'(q)dq\\
	 	&=\Lambda(\alpha),
	 	\end{align*}
	 	where the last equation used the Crisanti-Sommers formula \eqref{spf} and the definition of $\alpha_{P,\bb}.$ This gives \eqref{thm10:eq-1} and evidently also \eqref{thm10:eq1}, while \eqref{thm10:eq2} follows directly from \eqref{thm:op:eq2} and  \eqref{thm10:eq-1}. The characterizations of the optimizers of \eqref{thm10:eq1} and \eqref{thm10:eq2} are valid from Theorem~\ref{thm:op}.
	 \end{proof}
 	 
	 \subsection{The Parisi formula as a maximization problem}
	 In this subsection, we return to the Ising mixed $p$-spin model, considered in Subsection \ref{sec:Isingpspin} and show one application of Theorem \ref{thm7}. Recall $F(\beta)$ from \eqref{eq:add} and $\Gamma(\alpha)$ from \eqref{eq-1}. 
	 Consider the space $\mathbf B$ of bounded functions on $[0,1]$. For $f \in \mathbf B$, we define the following two real-valued functionals,

\begin{equation*}
\begin{split}
L_{*}(f)&= \inf_{\alpha \in \mathcal D}\Bigl( \int_{0}^{1} \alpha(s)f(s)ds    +\frac{\beta^{2}}{2}\int_0^1\alpha(s)\xi'(s)ds \bigg )
\end{split}
\end{equation*}
and
$$\Gamma^{*}(f)=\sup_{\alpha \in \mathcal D} \bigg (\int_{0}^{1} \alpha(s)f(s)ds  - \Gamma(\alpha) \bigg ).$$
The following corollary is a combination of Theorem \ref{thm7} and convex duality.

\begin{corollary}\label{Parisiisatruesup} For any $\beta \geq 0$,
\begin{equation}\label{eq:theoremequation}
F(\beta) = \max_{f \in \mathbf B} \bigl(L_{*}(f)-\Gamma^{*}(f)  \bigr).
\end{equation}

\end{corollary}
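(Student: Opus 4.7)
\noindent\textbf{Plan of proof for Corollary \ref{Parisiisatruesup}.} The idea is to combine Theorem \ref{thm7} with two elementary Fenchel--Young type inequalities, and then exhibit an explicit maximizer $f^{*}$ to see that the supremum is attained. The key preliminary observation is that the minimization in Theorem \ref{thm7} can be upgraded from $\mathcal{D}_{0}$ to the whole of $\mathcal{D}$:
\begin{equation}\label{eq:plan1}
F(\beta) = \inf_{\alpha \in \mathcal{D}}\Bigl(\Gamma(\alpha) + \frac{\beta^{2}}{2}\int_{0}^{1}\alpha(s)\xi'(s)\,ds\Bigr).
\end{equation}
The inequality $\leq$ in \eqref{eq:plan1} is immediate because $\mathcal{D}_{0}\subset\mathcal{D}$; for the reverse direction, the definition \eqref{eq-1} of $\Gamma(\alpha)$ evaluated at the test temperature $\beta'=\beta$ yields $\Gamma(\alpha) \geq \mathcal{P}_{\beta}(\alpha) - \frac{\beta^{2}}{2}\int_{0}^{1}\alpha(s)\xi'(s)\,ds$, so the Parisi formula \eqref{eq:add} produces $\Gamma(\alpha) + \frac{\beta^{2}}{2}\int_{0}^{1}\alpha(s)\xi'(s)\,ds \geq \mathcal{P}_{\beta}(\alpha) \geq F(\beta)$ for every $\alpha\in\mathcal{D}$.

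For the upper bound $\sup_{f \in \mathbf{B}}(L_{*}(f) - \Gamma^{*}(f)) \leq F(\beta)$, I fix $f \in \mathbf{B}$ and any $\alpha \in \mathcal{D}$ and plug this $\alpha$ as a test element in the infimum defining $L_{*}(f)$ and in the supremum defining $\Gamma^{*}(f)$, obtaining
\[
L_{*}(f) \leq \int_{0}^{1}\alpha(s)f(s)\,ds + \frac{\beta^{2}}{2}\int_{0}^{1}\alpha(s)\xi'(s)\,ds,\qquad \Gamma^{*}(f) \geq \int_{0}^{1}\alpha(s)f(s)\,ds - \Gamma(\alpha).
\]
Subtracting these two estimates cancels the common term $\int_{0}^{1}\alpha(s)f(s)\,ds$ and leaves $L_{*}(f) - \Gamma^{*}(f) \leq \Gamma(\alpha) + \frac{\beta^{2}}{2}\int_{0}^{1}\alpha(s)\xi'(s)\,ds$. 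Taking the infimum over $\alpha \in \mathcal{D}$ and using \eqref{eq:plan1} gives the required upper bound.

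For attainment, I set $f^{*}(s) := -\frac{\beta^{2}}{2}\xi'(s)$, which lies in $\mathbf{B}$ since the condition $\sum_{p \geq 2} 2^{p} c_{p}^{2} < \infty$ implies that $\xi'$ is bounded on $[0,1]$. With this choice the two $\int \alpha f^{*}\, ds$ terms cancel and a direct computation gives $L_{*}(f^{*}) = \inf_{\alpha}0 = 0$, while
\[
\Gamma^{*}(f^{*}) = \sup_{\alpha \in \mathcal{D}}\Bigl(-\Gamma(\alpha) - \frac{\beta^{2}}{2}\int_{0}^{1}\alpha(s)\xi'(s)\,ds\Bigr) = -\inf_{\alpha\in\mathcal{D}}\Bigl(\Gamma(\alpha)+\frac{\beta^{2}}{2}\int_{0}^{1}\alpha(s)\xi'(s)\,ds\Bigr)=-F(\beta)
\]
by \eqref{eq:plan1}. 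Hence $L_{*}(f^{*}) - \Gamma^{*}(f^{*}) = F(\beta)$, which together with the upper bound shows that the supremum in \eqref{eq:theoremequation} is in fact a maximum, achieved at $f^{*}$, and equals $F(\beta)$. The only mildly subtle step is the extension \eqref{eq:plan1}, which ensures the Fenchel-type pairing is set up correctly; once that is in place, both directions are essentially one-line Fenchel--Young inequalities together with an explicit optimizer, so I do not expect serious technical obstacles.
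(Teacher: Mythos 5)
Your proof is correct and follows essentially the same route as the paper: rewrite $F(\beta)$ as $\inf_{\alpha\in\mathcal D}\bigl(\Gamma(\alpha)+\tfrac{\beta^2}{2}\int_0^1\alpha(s)\xi'(s)ds\bigr)$, obtain $F(\beta)\geq \sup_{f}\bigl(L_*(f)-\Gamma^*(f)\bigr)$ by the Fenchel--Young pairing, and then exhibit an explicit optimizer proportional to $\xi'$. The only differences are refinements in your favor: you spell out the passage of the infimum from $\mathcal D_0$ to all of $\mathcal D$, which the paper leaves implicit, and your choice $f^*(s)=-\tfrac{\beta^2}{2}\xi'(s)$ (rather than the paper's $f_\beta(s)=\tfrac{\beta^2}{2}\xi'(s)$) is the sign for which the computations $L_*(f^*)=0$ and $\Gamma^*(f^*)=-F(\beta)$ actually go through.
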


The variational problem \eqref{eq:theoremequation} mimicks a maximizer coming from an entropy-energy variational principle. First, only $L_{*}$ depends on the temperature parameter. Second, the functional inside the variational principle defining $L_{*}$ is linear and the function $\Gamma^{*}$ is convex and lower semi-continuous.  We comment and derive further properties of \eqref{eq:theoremequation} at the end of Section \ref{proofSec2}.

\begin{proof}[\bf Proof of Corollary \ref{Parisiisatruesup}]
Recall that from Theorem \ref{thm7} and the Parisi formula, we can alternatively rewrite \eqref{thm7:eq2} as 
\begin{align*}
F(\beta)&=\inf_{\alpha \in \mathcal D}\Bigl(\Gamma(\alpha)+\frac{\beta^2}{2}\int_0^1\alpha(s)\xi'(s)ds\Bigr).
\end{align*}	 
Since
\begin{align*}
\Gamma(\alpha)+\frac{\beta^{2}}{2}\int_0^1\alpha(s)\xi'(s)ds&=\Bigl(\int_0^1\alpha(s)f(s)ds+\frac{\beta^2}{2}\int_0^1\alpha(s)\xi'(s)ds\Bigr)\\
&\qquad+ \Bigl(\Gamma(\alpha)-\int_0^1\alpha(s)f(s)ds\Bigr),
\end{align*}
we have that for all $f \in \mathbf B$,
\begin{align*}
\Gamma(\alpha)+\frac{\beta^2}{2}\int_0^1\alpha(s)\xi'(s)ds&\geq L_*(f)-\Gamma^*(f)
\end{align*}
and thus,
\begin{align*}
F(\beta)&\geq \sup_f\bigl(L_*(f)-\Gamma^*(f)\bigr).
\end{align*}
To see the reverse inequality, we choose $f_{\beta}(s)=\beta^2\xi'(s)/2$ for $s\in[0,1].$ Then $L_*(f_\beta)=0$ and obviously the Parisi formula says
$\Gamma^*(f_{\beta})=-F(\beta).$ From these,
\begin{align*}
\sup_f\bigl(L_*(f)-\Gamma^*(f)\bigr)&\geq F(\beta)
\end{align*}
and so \eqref{eq:theoremequation} follows.

\end{proof}

	 \subsection{Proofs of Theorems \ref{duality} and \ref{thm:op} } \label{proofSec2}
	 
	 The idea of establishing the variational formula for $V$ in Theorem \ref{duality} is to approximate the squared Hamiltonians using linear approximations. One of the main difficulties here comes from the fact that the interaction involves infinitely many components. This is the reason why we need assumption (A1), which will be used to control all  Hamiltonians at once. More precisely, the following lemma gives uniform bounds for $H_{N,p}$ on $\Sigma_N$ for all $N,p\geq 1$ if the condition (A1) is given.

	 \begin{lemma}\label{add} Given  assumption (A1), there exists a sequence of positive real numbers $(b_p)_{p\geq 1}$ with $b_p<C_12^{-p/4}$ such that
	 	\begin{align}
	 	\label{add:eq1}
	 	\p\Bigl(\exists N_0\geq 1\,\,\mbox{such that}\,\,\sup_{\sigma\in\Sigma_N}\frac{|H_{N,p}(\sigma)|}{N}< b_p,\,\,\forall p\geq 1,N\geq N_0\Bigr)=1.
	 	\end{align}
	 	
	 \end{lemma}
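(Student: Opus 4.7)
The plan is to apply a Borel--Cantelli argument to the countable collection of events
\[
E_{N,p} := \Bigl\{\sup_{\sigma \in \Sigma_N} \frac{|H_{N,p}(\sigma)|}{N} \geq b_p\Bigr\},\quad (N,p)\in\mathbb{N}^2.
\]
If the double sum $\sum_{N,p \geq 1} \p(E_{N,p})$ is finite, then almost surely only finitely many of these events occur, so taking $N_0$ strictly greater than every $N$ appearing in that finite collection (and $N_0=1$ if none does) yields the conclusion.

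To control $\p(E_{N,p})$, I would first invoke the symmetry assumption (A0). Since $\sup_\sigma |H_{N,p}(\sigma)| = \max\bigl(\sup_\sigma H_{N,p}(\sigma),\,\sup_\sigma(-H_{N,p}(\sigma))\bigr)$ and by (A0) the variable $\sup_\sigma(-H_{N,p}(\sigma))$ has the same law as $\sup_\sigma H_{N,p}(\sigma)$, a union bound gives
\[
\p(E_{N,p}) \leq 2\,\p\bigl(\sup_\sigma H_{N,p}(\sigma) \geq b_p N\bigr).
\]
Next, I would choose $b_p$ strictly between the mean scale $C_1 2^{-p/2}$ (the upper bound on $\e\sup_\sigma H_{N,p}(\sigma)/N$ from (A1)) and the prescribed ceiling $C_1 2^{-p/4}$; the geometric-mean choice $b_p := C_1 2^{-3p/8}$ works. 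Then $b_p - C_1 2^{-p/2} = C_1 2^{-p/2}(2^{p/8} - 1) > 0$ for every $p \geq 1$, and the concentration bound in (A1) applied with $t = (b_p - C_1 2^{-p/2}) N$ yields
\[
\p\bigl(\sup_\sigma H_{N,p}(\sigma) \geq b_p N\bigr) \leq C_2 \exp(-\lambda_p N), \qquad \lambda_p := \frac{C_1^2(2^{p/8} - 1)^2}{C_2}.
\]

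Since $\lambda_p$ is bounded below by a positive constant uniformly in $p \geq 1$ and grows like $2^{p/4}$, the inner geometric series $\sum_{N \geq 1} e^{-\lambda_p N} = e^{-\lambda_p}/(1 - e^{-\lambda_p})$ is finite for each $p$ and decays much faster than geometrically as $p \to \infty$. Thus $\sum_{N,p \geq 1} \p(E_{N,p}) < \infty$ and the Borel--Cantelli lemma completes the argument. The only real subtlety is calibrating $b_p$: it must strictly exceed the mean scale $C_1 2^{-p/2}$ so that the deviation probability actually decays exponentially in $N$, stay below the prescribed ceiling $C_1 2^{-p/4}$, and still produce an exponent $\lambda_p$ that grows fast enough in $p$ for the outer sum to converge; the intermediate rate $b_p = C_1 2^{-3p/8}$ reconciles all three requirements, and this is the step that exploits the quantitative form of (A1) in an essential way.
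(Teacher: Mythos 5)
Your proof is correct and follows essentially the same route as the paper: a Borel--Cantelli argument over the events $\{\sup_{\sigma\in\Sigma_N}|H_{N,p}(\sigma)|/N\ge b_p\}$, with each tail controlled by combining the mean bound and the concentration inequality of (A1) after calibrating $b_p$ strictly between $C_1 2^{-p/2}$ and $C_1 2^{-p/4}$. If anything, your explicit use of (A0) via a union bound and your choice $b_p=C_1 2^{-3p/8}$ are slightly more careful than the paper's choice $b_p=C_1 2^{-p/4}(1-2^{-p/4})$, which dips below the mean scale $C_1 2^{-p/2}$ for small $p$.
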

	 
	 \begin{proof}
	 	From (A1), we have that
	 	\begin{align*}
	 	\e \sup_{\sigma\in\Sigma_N}\frac{|H_{N,p}(\sigma)|}{N}\leq \frac{C_1}{2^{p/2}}
	 	\end{align*}
	 	and that with probability at most $C_2e^{-2^pNt^2/C_2}$, the following event holds,
	 	\begin{align*}
	 	\sup_{\sigma\in\Sigma_N}\frac{|H_{N,p}(\sigma)|}{N}\geq \e \sup_{\sigma\in\Sigma_N}\frac{|H_{N,p}(\sigma)|}{N}+t.
	 	\end{align*}
	 	Combining these together, with probability at most $C_2e^{-2^pNt^2/C_2},$
	 	\begin{align*}
	 	\sup_{\sigma\in\Sigma_N}\frac{|H_{N,p}(\sigma)|}{N}\geq t-\frac{C_1}{2^{p/2}}.
	 	\end{align*}
	 	In particular, taking $t={2^{-p/4}}C_1$, we obtain that with probability at most
	 	$C_2e^{-2^{p/2}NC_1^2/C_2}
	 	$,
	 	\begin{align*}
	 	\sup_{\sigma\in\Sigma_N}\frac{|H_{N,p}(\sigma)|}{N}\geq b_p:=\frac{C_1}{2^{p/4}}\Bigl(1-\frac{1}{2^{p/4}}\Bigr).
	 	\end{align*}
	 	Denote by $\Omega_N$ the event that there exists some $p\geq 1$ such that 
	 	$$
	 	\sup_{\sigma\in\Sigma_N}\frac{|H_{N,p}(\sigma)|}{N}\geq b_p.
	 	$$
	 	Since
	 	\begin{align*}
	 	\sum_{N\geq 1}\p(\Omega_N)&\leq \sum_{N\geq 1}\sum_{p\geq 1}\p\Bigl(\sup_{\sigma\in\Sigma_N}\frac{|H_{N,p}(\sigma)|}{N}\geq b_p\Bigr)\\
	 	&\leq C_2\sum_{N\geq 1}\sum_{p\geq 1}e^{-2^{p/2}NC_1^2/C_2}<\infty,
	 	\end{align*}
	 	the Borel-Cantelli lemma says that the event that $\Omega_N$ occurs infinitely often has zero probability. This gives the announced result.
	 \end{proof}

	 For each $k\ge 1,$ set
	 \begin{align*}
	 \mathcal{B} _k&=\{\bb_k=(\beta_p)_{1\leq p\leq k}:\beta_p>0,\,\,\forall 1\leq p\leq k\},\\
	 \mathcal{M} _k&=\{\bm_k=(m_p)_{1\leq p\leq k}:m_p>0,\,\,\forall 1\leq p\leq k\}.
	 \end{align*}
	 Consider the following free energies,
	 \begin{align*}
	 F_{N,k}(\bb_k)&=\frac{1}{N}\log \int_{\Sigma_N}\exp \sum_{p=1}^k\beta_pH_{N,p}(\sigma)\nu_N(d\sigma),\,\,\forall \bb_k\in\mathcal{B} _k,\\
	 V_{N,k}(\bm_k)&=\frac{1}{N}\log \int_{\Sigma_N}\frac{1}{2^N}\exp \sum_{p=1}^k\frac{N}{2m_p}\Bigl(\frac{H_{N,p}(\sigma)}{N}\Bigr)^2\nu_N(d\sigma),\,\,\forall \bm_k\in \mathcal{M} _k.
	 \end{align*}
	 We use $F_k$ and $V_k$ to denote the limits of $(F_{N,k})_{N\geq 1}$ and $(V_{N,k})_{N\geq 1}$ whenever they exist in $\overline{\mathbb{R}}$.

	 \begin{lemma}\label{add2}
	 	Assume that (A0), (A1) and (A2) hold. Consider any $I\subseteq\mathbb{N}$ and $(\beta_p)_{p\notin I}$ with $\sum_{p\notin I}\beta_p^2<\infty.$ For any $\bg=(\gamma_p)_{p\in I}$ with $\sum_{p\in I}\gamma_p^2<\infty$, define the restriction of $F$ to $I$ by $$g(\bg)=F(\phi(\bg)),$$ where $\phi(\bg):=(\delta_p)_{p\geq 1}$ satisfies $\delta_p=\gamma_p$ if $p\in I$ and $\delta_p=\beta_p$ if $p\notin I.$ Then $g$ is continuous at $\boldsymbol{0}:=(0,0,\ldots).$
	 \end{lemma}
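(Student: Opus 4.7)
The plan is to establish continuity of $g$ at $\boldsymbol{0}$ by a direct Lipschitz estimate, using the a.s. uniform control on all the Hamiltonians provided by Lemma \ref{add}. First I would note that for any $\bg$ with $\sum_{p\in I}\gamma_p^2<\infty$, the extended sequence $\phi(\bg)$ lies in $\mathcal{T}$ since $\sum_{p\notin I}\beta_p^2<\infty$ by hypothesis, so $g(\bg)=F(\phi(\bg))$ is well defined by assumption (A2). Hence the task reduces to controlling $F(\phi(\bg))-F(\phi(\boldsymbol{0}))$.

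The key step is to write the finite-$N$ difference as
\begin{align*}
F_N(\phi(\bg))-F_N(\phi(\boldsymbol{0}))&=\frac{1}{N}\log\Bigl\la \exp\sum_{p\in I}\gamma_p H_{N,p}(\sigma)\Bigr\ra_{N},
\end{align*}
where $\la\cdot\ra_N$ denotes the Gibbs average with respect to the measure proportional to $\exp\sum_{p\notin I}\beta_p H_{N,p}(\sigma)\,\nu_N(d\sigma)$. Invoking Lemma \ref{add}, there exists a sequence $(b_p)$ with $b_p<C_1 2^{-p/4}$ such that almost surely, for all $N$ large enough, $\sup_{\sigma\in\Sigma_N}|H_{N,p}(\sigma)|/N<b_p$ holds for every $p\geq 1$ simultaneously. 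Combined with the Cauchy--Schwarz inequality this yields, a.s.\ and uniformly in $\sigma$,
\begin{align*}
\Bigl|\frac{1}{N}\sum_{p\in I}\gamma_p H_{N,p}(\sigma)\Bigr|&\leq \sum_{p\in I}|\gamma_p| b_p \leq \Bigl(\sum_{p\in I}\gamma_p^2\Bigr)^{1/2}\Bigl(\sum_{p\geq 1}b_p^2\Bigr)^{1/2}=:K\|\bg\|_2,
\end{align*}
with $K<\infty$ by the decay of $b_p$. Feeding this bound into the previous display gives $|F_N(\phi(\bg))-F_N(\phi(\boldsymbol{0}))|\leq K\|\bg\|_2$ for all sufficiently large $N$, almost surely.

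Finally, by (A2) both $F_N(\phi(\bg))$ and $F_N(\phi(\boldsymbol{0}))$ converge a.s.\ to the deterministic limits $g(\bg)$ and $g(\boldsymbol{0})$, so passing to the limit in the uniform bound yields $|g(\bg)-g(\boldsymbol{0})|\leq K\|\bg\|_2$, which in particular proves continuity at $\boldsymbol{0}$. The only subtle point I anticipate is ensuring that the event on which the uniform bound of Lemma \ref{add} holds does not depend on $\bg$; but the lemma produces a single full-measure event on which the bound is valid for every $p$ and every sufficiently large $N$, so one can extract convergence and apply the bound on the same event, avoiding any issue of exchanging limits with a $\bg$-dependent null set.
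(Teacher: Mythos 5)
Your proof is correct, but it follows a genuinely different route from the paper's. The paper never forms the Gibbs average of $\exp\sum_{p\in I}\gamma_pH_{N,p}$; instead it uses convexity of $F_N$ in the coordinates indexed by $I$ to sandwich the increment $F_N(\phi(\bg))-F_N(\phi(\boldsymbol{0}))$ between $-\sum_{p\in I}\partial_pF_N(\phi(\bg))\gamma_p$ and $\sum_{p\in I}\partial_pF_N(\phi(\boldsymbol{0}))\gamma_p$, identifies the partial derivatives as Gibbs averages of $H_{N,p}/N$ bounded by $\pm\sup_\sigma H_{N,p}(\sigma)/N$, then takes expectations and uses (A0) and (A1) to get $|\e F_N(\phi(\bg))-\e F_N(\phi(\boldsymbol{0}))|\leq C_1\sum_{p\in I}|\gamma_p|2^{-p/2}$, finally passing to the limit of \emph{expectations} via dominated convergence. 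You instead work pathwise: the exact identity $F_N(\phi(\bg))-F_N(\phi(\boldsymbol{0}))=N^{-1}\log\la\exp\sum_{p\in I}\gamma_pH_{N,p}\ra_N$ plus the almost-sure uniform bound of Lemma \ref{add} gives the deterministic estimate $|F_N(\phi(\bg))-F_N(\phi(\boldsymbol{0}))|\leq K\|\bg\|_2$ for all large $N$ on a full-measure event, and you then use the a.s.\ convergence in (A2) directly. Both arguments are sound and yield comparable Lipschitz moduli (the paper's weighted $\ell^1$ bound dominates an $\ell^2$ bound by Cauchy--Schwarz, exactly as in your estimate $\sum_{p\in I}|\gamma_p|b_p\leq K\|\bg\|_2$). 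Your version is slightly more self-contained at this step --- it needs neither convexity, nor (A0), nor dominated convergence, only Lemma \ref{add} as a black box --- while the paper's version avoids invoking the a.s.\ uniform control and works purely at the level of $\e F_N$. Your closing remark about the null set is well taken but not even necessary: since $g(\bg)$ and $g(\boldsymbol{0})$ are deterministic, it suffices that the inequality hold almost surely for each \emph{fixed} $\bg$ separately.
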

	 
	 \begin{proof}
	 	From the convexity of $F_{N}$ on the set of coordinates $I$,
	 	\begin{align}
	 	\label{add2:proof:eq1}
	 	-\sum_{p\in I}\partial_pF_{N}(\phi(\bg))\gamma_p\leq F_{N}(\phi(\bg))-F_{N,k}(\phi(\boldsymbol{0}))\leq \sum_{p\in I}\partial_pF_{N}(\phi(\boldsymbol{0}))\gamma_p
	 	\end{align}
	 	for any $\gamma=(\gamma_p)_{p\in I}$ with $\sum_{p\in I}\gamma_p^2<\infty.$ To control the two sides, observe that 
	 	\begin{align}\label{add2:proof:eq2}
	 	\inf_{\sigma\in\Sigma_N}\frac{H_{N,p}(\sigma)}{N}\leq\partial_p F_{N}(\phi(\bg))&=\Bigl\la\frac{H_{N,p}(\sigma)}{N}\Bigr\ra_{\bg}\leq \sup_{\sigma\in\Sigma_N}\frac{H_{N,p}(\sigma)}{N},\,\,\forall p\in I,
	 	\end{align}
	 	where $\la\cdot\ra_{\bg}$ is the expectation with respect to the Gibbs measure, $$
	 	G_{\bg}(d\sigma)=\frac{\exp\Bigl(\sum_{p\notin I}\beta_pH_{N,p}(\sigma)+\sum_{p\in I}\gamma_pH_{N,p}(\sigma)\Bigr)\nu_N(d\sigma)}{\int_{\Sigma_N}\exp\Bigl(\sum_{p\notin I}\beta_pH_{N,p}(\sigma')+\sum_{p\in I}\gamma_pH_{N,p}(\sigma')\nu_N(\sigma') }.$$
	 	From the condition (A0), the random variables $-\inf_{\sigma\in\Sigma_N}H_{N,p}(\sigma)$ and $\sup_{\sigma\in\Sigma_N}H_{N,p}(\sigma)$ have the same distribution.
	 	Taking expectations in \eqref{add2:proof:eq1} and applying \eqref{add2:proof:eq2} lead to
	 	\begin{align*}
	 	|\e F_{N}(\phi(\bg))-\e F_N(\phi(\boldsymbol{0}))|&\leq C_1\sum_{p\in I}\frac{\gamma_p}{2^{p/2}},
	 	\end{align*}
	 	where the inequality used (A1).
	 	Note that the condition (A2) implies $\lim_{N\rightarrow\infty} \e F_{N}(\phi(\bg))= F(\phi(\bg))$ by the dominated convergence theorem. Hence, passing to the limit in the last inequality gives
	 	\begin{align*}
	 	|g(\bg)-g(\boldsymbol{0})|&\leq C_1\sum_{p\in I}\frac{\gamma_p}{2^{p/2}}.
	 	\end{align*}
	 	The continuity of $F$ at $\boldsymbol{0}$ follows evidently.
	 \end{proof}
	 
	 The next lemma  is a finite dimensional version of \eqref{duality:eq1}.
	 
	 \begin{lemma}
	 	\label{lem12}
	 	Suppose that (A0), (A1) and (A2) hold. For any $\bm_k\in\mathcal{M} _k,$ we have
	 	\begin{align*}
	 	V_k(\bm_k)&=\sup_{\bb_k\in \mathcal{B} _k}\Bigl(F_k(\bb_k)-\frac{1}{2}\sum_{p=1}^k\beta_p^2 m_p\Bigr).
	 	\end{align*}
	 \end{lemma}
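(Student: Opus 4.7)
My plan is to apply the Hubbard--Stratonovich (Gaussian) identity
$$\exp\!\left(\frac{x^2}{2c}\right)=\sqrt{\frac{c}{2\pi}}\int_{\mathbb R}\exp\!\left(\beta x-\frac{c\beta^2}{2}\right)d\beta,\qquad c>0,$$
coordinatewise in $p=1,\dots,k$ (with $x=H_{N,p}(\sigma)$ and $c=m_pN$), and then extract the asymptotics via a Laplace-type argument. Multiplying the $k$ identities, integrating against $\nu_N$, and swapping integrals by Tonelli yields the key finite-$N$ representation
$$V_{N,k}(\bm_k)=\frac{1}{N}\log\!\int_{\mathbb R^k}\exp\!\Bigl(N\,G_N(\bb_k)\Bigr)\,d\bb_k+O\!\Bigl(\frac{\log N}{N}\Bigr),$$
where $G_N(\bb_k):=F_{N,k}(\bb_k)-\tfrac12\sum_{p=1}^k\beta_p^2m_p$, and the $O(\log N/N)$ term absorbs the Jacobian factors $\prod_p\sqrt{m_pN/(2\pi)}$. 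The lemma thus reduces to showing that this integral has Laplace asymptotics equal to $\sup_{\bb_k\in\mathcal B_k}G(\bb_k)$, where $G(\bb_k):=F_k(\bb_k)-\tfrac12\sum_{p=1}^k\beta_p^2m_p$.

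\textbf{Lower bound.} For the $\ge$ inequality, I would bypass the integral representation and instead use the pointwise Young inequality
$$\frac{N}{2m_p}\Bigl(\frac{H_{N,p}(\sigma)}{N}\Bigr)^{\!2}\;\ge\;\beta_p H_{N,p}(\sigma)-\frac{\beta_p^2 m_p N}{2},\qquad \beta_p\in\mathbb R,$$
which is just the tangent-line bound on $x^2/(2m_p)$. Exponentiating, integrating against $\nu_N$, and taking $\tfrac1N\log$ gives $V_{N,k}(\bm_k)\ge G_N(\bb_k)$ for every $\bb_k\in\mathbb R^k$. Passing to the limit along the a.s.\ convergence guaranteed by (A2) and taking supremum over $\bb_k\in\mathcal B_k$ yields half of the identity.

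\textbf{Upper bound.} The harder direction is $\le$, and here Lemma~\ref{add} is the crucial tool: almost surely, for $N$ large, $|H_{N,p}(\sigma)|/N\le b_p$ uniformly in $\sigma$ for every $p$. This forces
$$G_N(\bb_k)\;\le\;\sum_{p=1}^k|\beta_p|b_p\;-\;\frac12\sum_{p=1}^k\beta_p^2 m_p,$$
a concave-down quadratic that tends to $-\infty$ as $|\bb_k|\to\infty$, \emph{uniformly in $N$}. Thus the $\bb_k$-integral concentrates on a large ball $B_R\subset\mathbb R^k$ up to an error that is exponentially small in $N$. On $B_R$, the pointwise convergence $F_{N,k}(\bb_k)\to F_k(\bb_k)$ from (A2), combined with the convexity of $F_{N,k}$ in $\bb_k$ (immediate from H\"older's inequality), upgrades to almost-sure uniform convergence, and a standard Laplace argument gives
$$\lim_{N\to\infty}\frac{1}{N}\log\!\int_{\mathbb R^k}e^{NG_N(\bb_k)}\,d\bb_k\;=\;\sup_{\bb_k\in\mathbb R^k}G(\bb_k).$$
Finally, assumption (A0) forces $F_k$ to be even in each coordinate (the distribution of $F_{N,k}(\bb_k)$ is invariant under sign flips and the limit is deterministic), and the continuity of $G$ on the closed cone then lets me replace $\sup_{\mathbb R^k}$ by $\sup_{\mathcal B_k}$, completing the identity.

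\textbf{Main obstacle.} The delicate point is executing the Laplace step in the presence of randomness: one needs almost-sure uniform control of $F_{N,k}$ on a compact set together with uniform-in-$N$ tail decay. Both ingredients are available --- the tail decay from Lemma~\ref{add}, and uniform convergence from the classical fact that convex functions converging pointwise on an open convex set converge uniformly on compact subsets --- so once these are lined up the rest is bookkeeping.
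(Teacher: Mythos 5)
Your proof is correct, but your upper bound takes a genuinely different route from the paper's. The paper never introduces an auxiliary integral over temperatures: it discretizes the values of $H_{N,p}(\sigma)/N$ into windows of width $\varepsilon$ (using Lemma \ref{add} to know these values lie in $[-b_p,b_p]$), bounds the quadratic exponent on each window by the linear exponent at the grid point $\beta_{p,i}=i\varepsilon/m_p$ plus an error $\varepsilon^2\sum_p (2m_p)^{-1}$, and then takes $N^{-1}\log$ of a finite sum, so that only the limits $F_k(\bb_k)$ at finitely many grid points are needed, together with evenness (from (A0)) and continuity at $\boldsymbol{0}_k$ (Lemma \ref{add2}) to pass from the grid, which contains negative and zero coordinates, to $\sup_{\mathcal{B}_k}$. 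You instead linearize exactly via Hubbard--Stratonovich and run a Laplace argument in the dual variable $\bb_k$; this buys an exact finite-$N$ duality and a cleaner conceptual picture, but at the price of needing almost-sure \emph{locally uniform} convergence of $F_{N,k}$, which you correctly obtain from convexity --- though to invoke that fact you should first upgrade (A2) to simultaneous a.s.\ pointwise convergence on a countable dense set of $\bb_k$'s (intersect the null sets), a step you leave implicit. Two further small patches: your tail bound $G_N(\bb_k)\leq \sum_p|\beta_p|b_p-\tfrac12\sum_p\beta_p^2m_p$ omits the term $\tfrac1N\log\nu_N(\Sigma_N)=F_{N}(\boldsymbol{0})$, which is not assumed bounded a priori but is a.s.\ convergent by (A2) applied at $\bb=\boldsymbol{0}$, so the quadratic decay uniform in (large) $N$ survives; and your final restriction from $\sup_{\mathbb{R}^k}$ to $\sup_{\mathcal{B}_k}$ is fine because $F_k$, being a finite convex function on $\mathbb{R}^k$, is continuous everywhere (you do not even need the paper's Lemma \ref{add2} for this). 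The lower bound via the tangent-line (Young) inequality is identical to the paper's \eqref{lem12:proof:eq1}. Also note the factor $\tfrac{1}{2^N}$ in the paper's displayed definition of $V_{N,k}$ is a typo (it appears neither in \eqref{eq:VNdef} nor in the paper's own proof), and you were right to ignore it.
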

	 
	 \begin{proof}
	    From Lemma \ref{add}, there exists a sequence of positive real numbers $(b_p)_{p\geq 1}$ such that with probability one, there exists some $N_0>0$ such that 
	 	\begin{align*}
	 	\frac{H_{N,p}(\sigma)}{N}\in[-b_p,b_p],\,\,\forall \sigma\in\Sigma_N\,\,\mbox{and}\,\,\forall p\geq 1, N\geq N_0,
	 	\end{align*}
	 	Since we only care about the limiting behavior of $(V_{k,N})_{N\geq 1}$, we assume without loss of generality that $|H_{N,p}/N|\leq b_p$ for all $1\leq p\leq k$ and $N\geq 1.$ For any $\varepsilon>0,$ let $\ell_p\in\mathbb{N}$ with $\ell_p\geq b_p/\varepsilon.$ Set $\beta _{p,i}=i\varepsilon/m_p$ for $i\in [-\ell_p,\ell_p]\cap\mathbb{Z}.$ For any $i_p\in [-\ell_p,\ell_p]\cap\mathbb{Z}$ for $1\leq p\leq k,$ define
	 	\begin{align*}
	 	A_{p,i_1,\ldots,i_k}&=\Bigl\{\sigma\in\Sigma_N\Big|\frac{H_{N,p}(\sigma)}{N}\in[(i_p-1)\varepsilon,(i_p+1)\varepsilon],\,\,\forall 1\leq p\leq k\Bigr\}.
	 	\end{align*}
	 	Observe that for any $\sigma\in A_{p,i_1,\ldots,i_k}$, since $\beta _{p,i_p}m_p=i_p\varepsilon$, we have
	 	\begin{align*}
	 	\sum_{p=1}^k\frac{N}{2m_p}\Bigl(\frac{H_{N,p}(\sigma)}{N}\Bigr)^2
	 	&=\sum_{p=1}^k\frac{N}{2m_p}\Bigl(\frac{H_{N,p}(\sigma)}{N}-\beta _{p,i_p}m_p+\beta _{p,i_p}m_p\Bigr)^2\\
	 	&= \sum_{p=1}^k\frac{N}{2m_p}\Bigl(\frac{H_{N,p}(\sigma)}{N}-\beta _{p,i_p}m_p\Bigr)^2\\
	 	&\,\,+\sum_{p=1}^kN\beta _{p,i_p}\Bigl(\frac{H_{N,p}(\sigma)}{N}-\beta _{p,i_p}m_p\Bigr)+
	 	\sum_{p=1}^k\frac{N\beta _{p,i_p}^2m_p}{2}\\
	 	&\leq N\varepsilon^2\sum_{p=1}^k\frac{1}{2m_p}+\sum_{p=1}^k\beta _{p,i_p}H_{N,p}(\sigma)-N\sum_{p=1}^k\frac{\beta _{p,i_p}^2m_p}{2},
	 	\end{align*}
	 	from which
	 	\begin{align*}
	 	&\int_{\Sigma_N}\exp \sum_{p=1}^k\frac{N}{2m_p}\Bigl(\frac{H_{N,p}(\sigma)}{N}\Bigr)^2\nu_N(d\sigma)\\
	 	&\leq \sum_{|i_1|\leq\ell_1,\ldots,|i_k|\leq\ell_k}\exp\Bigl(-N\sum_{p=1}^k\frac{\beta _{p,i_p}^2m_p}{2}+N\varepsilon^2\sum_{p=1}^k\frac{1}{2m_p}\Bigr)\\
	 	&\qquad\qquad\qquad\cdot\int_{\Sigma_N}1_{A_{p,i_1,\ldots,i_k}}(\sigma)\exp\Bigl(\sum_{p=1}^k\beta _{p,i_p}H_{N,p}(\sigma)\Bigr)\nu_N(d\sigma)\\
	 	&\leq \sum_{|i_1|\leq\ell_1,\ldots,|i_k|\leq\ell_k}\exp\Bigl(-N\sum_{p=1}^k\frac{\beta _{p,i_p}^2m_p}{2}+N\varepsilon^2\sum_{p=1}^k\frac{1}{2m_p}\Bigr)\\
	 	&\qquad\qquad\qquad\cdot\int_{\Sigma_N}\exp\Bigl(\sum_{p=1}^k\beta _{p,i_p}H_{N,p}(\sigma)\Bigr)\nu_N(d\sigma).
	 	\end{align*}
	 	Consequently, taking $N^{-1}\log$ on both sides leads to
	 	\begin{align*}
	 	V_{N,k}(\bm_k)&\leq \frac{\log\prod_{p=1}^k(2\ell_p+1)}{N}+\varepsilon^2\sum_{p=1}^k\frac{1}{2m_p}\\
	 	&\,\,+\max_{-\ell_p\leq i_p\leq \ell_p,\,\,\forall 1\leq p\leq k}\Bigl(F_{N,k}(\beta_{k,i_1},\ldots,\beta_{k,i_p})-\frac{1}{2}\sum_{p=1}^k\beta _{p,i}^2m_p\Bigr)
	 	\end{align*}
	 	and from (A2), passing to limit gives
	 	\begin{align*}
	 	\limsup_{N\rightarrow\infty}V_{N,k}(\bm_k)&\leq \varepsilon^2\sum_{p=1}^k\frac{1}{m_p}+\max_{-\ell_p\leq i_p\leq \ell_p,\,\,\forall 1\leq p\leq k}\Bigl(F_{k}(\beta_{k,i_1},\ldots,\beta_{k,i_p})-\frac{1}{2}\sum_{p=1}^k\beta _{p,i}^2m_p\Bigr)\\
	 	&\leq\varepsilon^2\sum_{p=1}^k\frac{1}{m_p}+ \sup_{\bb_k\in\mathcal{B} _k}\Bigl(F_{k}(\bb _k)-\frac{1}{2}\sum_{p=1}^k\beta _p^2m_p\Bigr),
	 	\end{align*}
	 	where the last inequality used the fact that $F_k$ is an even function in each coordinate and $F_k$ is continuous at $\boldsymbol{0}_k=(0,\ldots,0)$ by applying Lemma \ref{add2}.
	 	Since this holds for any $\varepsilon,$ it follows
	 	\begin{align*}
	 	\limsup_{N\rightarrow\infty}V_{N,k}(\bm_k)&\leq \sup_{\bb _k\in\mathcal{B} _k}\Bigl(F_{k}(\bb _k)-\frac{1}{2}\sum_{p=1}^k\beta _p^2m_p\Bigr).
	 	\end{align*}
	 	To prove the reverse direction, note that for any $\bb_k\in\mathcal{B} _k$,
	 	\begin{align}
	 	\begin{split}\label{lem12:proof:eq1}
	 	\beta _pH_{N,p}(\sigma)&=\beta _p(Nm_p)^{1/2}\cdot\frac{H_{N,p}(\sigma)}{(Nm_p)^{1/2}}\leq  \frac{\beta _p^2Nm_p}{2}+\frac{H_{N,p}(\sigma)^2}{2Nm_p},
	 	\end{split}
	 	\end{align}
	 	which implies
	 	\begin{align*}
	 	V_{N,k}(\bm_k)&\geq F_{N,k}(\bb _k)-\frac{1}{2}\sum_{p=1}^k\beta _p^2m_p,\,\,\forall \bb _k\in \mathcal{B} _k
	 	\end{align*}
	 	and thus,
	 	\begin{align*}
	 	\liminf_{N\rightarrow\infty}V_{N,k}(\bm_k)&\geq \sup_{\bb _k\in \mathcal{B} _k}\Bigl(F_k(\bb _k)-\frac{1}{2}\sum_{p=1}^k\beta _p^2m_p\Bigr).
	 	\end{align*}
	 	This finishes our proof.
	 \end{proof}
	 
	 As we have seen from Lemma \ref{lem12}, the variational representation of $V$ is based on the assumptions (A0), (A1) and (A2). In what follows, we continue to provide the proof of Theorem \ref{duality}. As we will see below, the same conditions in Lemma \ref{lem12} also yield \eqref{duality:eq1}. However, the validity of the formula \eqref{duality:eq2} for $F$ will require concavity of the limiting free energy.
	 
	 \begin{proof}[\bf Proof of Theorem \ref{duality}] Let $\bm\in\mathcal{M} $ and let $\bm_k$ be the projection of the first $k$-coordinates of $\bm$. Note that from Lemmas \ref{add2} and \ref{lem12},
	 	\begin{align*}
	 	V_k(\bm_k)&=\sup_{\bb_k\in \mathcal{B} _k}\Bigl(F_k(\bb_k)-\frac{1}{2}\sum_{p=1}^k \beta_p^2 m_p\Bigr)\leq \sup_{\bb\in \mathcal{B} }\Bigl(F(\bb)-\frac{1}{2}\sum_{p\geq 1} \beta_p^2 m_p\Bigr).
	 	\end{align*} 
	 	Since $V_{N,k}(\bm_k)\leq V_{N,k+1}(\bm_{k+1})$ for all $k\geq 1,$ we may use the monotone convergence theorem to get
	 	\begin{align*}
	 	V_N(\bm)&\leq \sup_{\bb\in \mathcal{B} }\Bigl(F(\bb)-\frac{1}{2}\sum_{p\geq 1} \beta_p^2 m_p\Bigr)
	 	\end{align*}
	 	and passing to limit,
	 	\begin{align*}
	 	\limsup_{N\rightarrow\infty}V_N(\bm)&\leq \sup_{\bb\in \mathcal{B} }\Bigl(F(\bb)-\frac{1}{2}\sum_{p\geq 1} \beta_p^2 m_p\Bigr).
	 	\end{align*}
	 	On the other hand, from \eqref{lem12:proof:eq1}, we could also obtain that for any $\bb\in\mathcal{B} $,
	 	\begin{align*}
	 	V_N(\bm)&\geq F_N(\bb)-\frac{1}{2}\sum_{p\geq 1} \beta_p^2m_p,
	 	\end{align*}
	 	from which letting $N\rightarrow\infty$ and then taking supremum,
	 	\begin{align*}
	 	\liminf_{N\rightarrow\infty}V_N(\bm)&\geq \sup_{\bb\in \mathcal{B} }\Bigl(F(\bb)-\frac{1}{2}\sum_{p\geq 1} \beta_p^2 m_p\Bigr).
	 	\end{align*}
	 	This finishes the proof of $(i).$ 
	 	
	 	As for $(ii),$ we assume that $F$ is G\^{a}teaux differentiable on $\mathcal{B} $ and $F(\bb^{1/2})$ is concave on $\mathcal{B} .$ Let $\bb\in \mathcal{B} $ and set $C(\bb)=\{p\geq 1:\partial_{p}F(\bb)=0\}.$ Take any $\bm'\in\mathcal{M} $ with $m_p'=\beta_p^{-1}\partial_pF(\bb)$ for all $p\notin C(\bb).$ From $(i),$ we rewrite
	 	\begin{align}
	 	\begin{split}\label{duality:proof:eq1}
	 	V(\bm')&=  \sup_{\bg\in\mathcal{B} }L(\bg^2)
	 	\end{split}
	 	\end{align}
	 	for 
	 	$$
	 	L(\boldsymbol{t}):=F(\boldsymbol{t}^{1/2})-\frac{1}{2}\sum_{p\geq 1} t_p m_p'.
	 	$$
	 	Since $F(\bb^{1/2})$ is concave in $\bb\in\mathcal{B} $, the function $L$ is concave as well. As now,
	 	\begin{align*}
	 	\partial_pL(\boldsymbol{t})&=\frac{1}{2}\Bigl(\frac{\partial_pF(\boldsymbol{t}^{1/2})}{t_p^{1/2}}-m_p'\Bigr),
	 	\end{align*}
	 	if we let $\boldsymbol{t}={\bb}^2$, from our choice of $\bm',$ this partial derivative is equal to $0$ for all $p\notin C(\bb)$ and moreover, it is equal to $-m_p'/2$ for all $p\in C(\bb).$ It then follows from the concavity of $L$ that
	 	\begin{align*}
	 	L(\bg^2)-L({\bb}^2)&\leq \sum_{p\geq 1}\partial_pL({\bb}^2)(\gamma_p^2-\beta_p^2)\\ 
	 	&=-\frac{1}{2}\sum_{p\in C(\bb)}m_p'(\gamma_p^2-\beta_p^2)\\
	 	&\leq \frac{1}{2}\sum_{p\in C(\bb)}m_p'\beta_p^2,\qquad\forall \bg\in\mathcal{B} ,
	 	\end{align*}
	 	from which
	 	\begin{align}
	 	\label{duality:proof:eq2}
	 	V(\bm')\leq F(\bb)-\frac{1}{2}\sum_{p\geq 1} \beta_p^2 m_p'+\frac{1}{2}\sum_{p\in C(\bb)}m_p'\beta_p^2
	 	\end{align}
	 	and thus,
	 	\begin{align*}
	 	\inf_{\bm\in\mathcal{M} }\Bigl(V(\bm)+\frac{1}{2}\sum_{p\geq 1} \beta_p^2 m_p\Bigr)&\leq V(\bm')+\frac{1}{2}\sum_{p\geq 1} \beta_p^2 m_p'\\
	 	&\leq F(\bb)+ \frac{1}{2}\sum_{p\in C(\bb)} \beta_p^2 m_p'.
	 	\end{align*}
	 	Since this inequality is valid for any $\bm'\in\mathcal{M} $ with $m_p'=\beta_p^{-1}\partial_pF(\bb)$ for all $p\notin C(\bb).$ Letting $m_p'\downarrow 0$ for all $p\in C(\bb)$, we obtain
	 	
	 	\begin{align*}
	 	\inf_{\bm\in\mathcal{M} }\Bigl(V(\bm)+\frac{1}{2}\sum_{p\geq 1} \beta_p^2 m_p\Bigr)&\leq F(\bb).
	 	\end{align*}
	 	Finally, from \eqref{lem12:proof:eq1}, it can be easily seen that
	 	\begin{align*}
	 	F(\bb)&\leq \inf_{\bm\in\mathcal{M} }\Bigl(V(\bm)+\frac{1}{2}\sum_{p\geq 1} \beta_p^2 m_p\Bigr).
	 	\end{align*}
	 	Our proof is completed by this and last inequalities.
	 \end{proof}

	 \begin{proof}[\bf Proof of Theorem \ref{thm:op}]
	 	In view of the second-half of the proof of Theorem \ref{duality} and the notation $C(\bb)$ therein, if  $\bb\in\mathcal{B} _0$ and $\bm\in\mathcal{M} _0$ satisfy $m_p=\beta_p^{-1}\partial_pF(\bb)$ for all $p\geq 1,$ then $C(\bb)=\emptyset$ and from \eqref{duality:eq1} and \eqref{duality:proof:eq2},
	 	$$
	 	V(\bm)=F(\bb)-\frac{1}{2}\sum_{p\geq 1} \beta_p^2m_p.
	 	$$ 
	 	This clearly gives $(i)$ and $(ii)$ by using \eqref{duality:proof:eq1} and \eqref{duality:proof:eq2}.
	 \end{proof}


\begin{thebibliography}{99}
    	  		
    	  		\small
%
%
%
    	  		\bibitem{AK} 
    	  			Arguin, L.-P., Kistler, N: Microcanonical analysis of the random energy model in a random magnetic field. {\it J. Stat. Phys.}, {\bf157}, no. 1, 1--16 (2014)
%
%
%
    	  		\bibitem{AC14}
    	  		Auffinger, A., Chen, W.-K.: The Parisi formula has a unique minimizer. {\it Comm. Math. Phys.}, {\bf 335}, no. 3, 1429--1444 (2015)
%
    	  		\bibitem{AC15}
    	  		Auffinger, A., Chen, W.-K.: The Legendre structure of the Parisi formula. To appear in  {\it Comm. Math. Phys.} (2015)
%
%
%
%
%
			    \bibitem{BK} Bolthausen, E., Kistler, N.: Universal structures in some mean field spin glasses and an application. {\it J. Math. Phys.}, {\bf 49}, 25--205 (2008)
%
		    	\bibitem{BK2} Bolthausen, E., Kistler, N.: A quenched large deviation principle and a Parisi formula for a
		    	Perceptron version of the GREM. Probability in Complex Physical Systems, Springer Proceedings in Mathematics, 11, 425–442 (2012)
%
%
%
%
%
    	  		\bibitem{C13}
    	  		Chen, W.-K.: The Aizenman-Sims-Starr scheme and Parisi formula for mixed p-spin spherical models. {\it Elec. Journal Probab.}, {\bf 18}, no. 94, 1--14 (2013)
%
    	  		\bibitem{C14}
    	  		Chen, W.-K.: Variational representation for the Parisi functional and the two-dimensional Guerra-Talagrand bound. arXiv:1501.06635 (2015)
%
		    	\bibitem{ArnabChen15} 
		    	Chen, W.-K., Sen, A.: Parisi formula, disorder chaos and fluctuation for the ground state energy in the spherical mixed p-spin models. arXiv:1512.08492 (2015)
%
%
    	  		\bibitem{CS}     	  		Crisanti, A., Sommers, H.-J.: The spherical {p}-spin interaction spin glass model: the statics.  {\it Zeitschrift f\"{u}r Physik B Condensed Matter}, 87, 341--354 (1992) 
%
    	  		\bibitem{DZ} 
    	  		Dembo, A., Zeitouni, O.: Large Deviations Techniques and Applications. 2nd ed., Springer-Verlag, New York (2009)
%
		     	\bibitem{Ellis} 
		     	Ellis, R.: Entropy, Large Deviations, and Statistical Mechanics. 1st ed., Springer-Verlag, Berlin (2006)
%
%
%
%
%
    	  		\bibitem{G13} 
    	  		Guerra, F.: The phenomenon of spontaneous replica symmetry breaking in complex statistical mechanics systems. {\it J. Phys.: Conf. Ser.}, {\bf 442}, 012013  (2013)
%
\bibitem{G15} Guerra, F.: Legendre structures in statistical mechanics for ordered and disordered systems, Cambridge University Press, in \textit{Advances in disordered systems,
random processes and some applications}, Contucci, P. et al, eds, in print (2016)
%
%
		     	\bibitem{MPV} 
		     	M\'{e}zard, M., Parisi, G., Virasoro, M.: Spin glass theory and beyond. World Scientific, {\bf 9}, Singapore (2004)
%
%
%

		        \bibitem{P05} 
		        Panchenko, D.:  Free energy in the generalized Sherrington-Kirkpatrick mean field model. {\it Rev. Math. Phys.}, {\bf 17}, no. 7, 793--857 (2005)
%
   	  	    \bibitem{P08}
    	  		Panchenko, D.: On differentiability of the Parisi formula.  {\it Electron. Commun. Probab.}, {\bf 13}, 241--247 (2008)
%
%
%
   	  		\bibitem{P13}
    	  		Panchenko, D.: The Sherrington-Kirkpatrick model. Springer Monographs in Mathematics. Springer, New York (2013)
%
 	  		 \bibitem{Panchenko} 
    	  		 Panchenko, D.: The Parisi formula for mixed p-spin models. {\it Ann. Probab.}, {\bf 42}, no. 3, 946--958 (2014)
%
    	  		\bibitem{P15}
    	  		Panchenko, D.: The free energy in a multi-species Sherrington-Kirkpatrick model. {\it Ann. of Prob.}, {\bf 43}, no. 6, 3494–-3513 (2015)
%
    	  		\bibitem{P152}
    	  		Panchenko, D.: Free energy in the Potts spin glass. arXiv:1512.00370 (2015)
%
    	  		\bibitem{P153}
    	  		Panchenko, D.: Free energy in the mixed $p$-spin models with vector spins. arXiv:1512.04441 (2015) 
%

		        \bibitem{Parisi}
		        Parisi, G.:  Infinite number of order parameters for spin-glasses. {\it Phys. Rev. Lett.}, {\bf 43}, 1754--1756 (1979)
%
%
%
    	  		\bibitem{RT}
    	  		Rassoul-agha, F., Sepp\"al\"ainen, T.: A course on large deviations with an introduction to Gibbs measures. Graduate Studies in Mathematics, American Mathematical Society (2015)
%
    	  		\bibitem{T03}
   	  		Talagrand, M.: Spin Glasses: A Challenge for Mathematicians: Cavity and Mean Field Models. Ergebnisse der Mathematik und ihrer Grenzgebiete. 3. Folge. A Series of Modern Surveys in Mathematics, {\bf 46}, Springer-Verlag, Berlin (2003)
%
		     	\bibitem{Talagrand} 
		     	Talagrand, M.: The Parisi formula. {\it Ann. of Math. (2)}, {\bf 163}, no. 1, 221--263 (2006)
%
    	  		\bibitem{TalagrandSphh} 
    	  		Talagrand, M.: The free energy of the spherical mean-field model. {\it Probab. Theory Relat. Fields}, {\bf 134}, 339--382 (2006)
%
    	  	   \bibitem{T11}
    	  	    Talagrand, M.: Mean field models for spin glasses. Ergebnisse der Mathematik und ihrer Grenzgebiete. 3. Folge. A Series of Modern Surveys in Mathematics, {\bf 55}, Springer-Verlag, Berlin (2011)
%
    	  	\end{thebibliography}
\end{document}